\documentclass[12pt,leqno]{amsart}
\usepackage{latexsym,amsmath,amssymb,amscd,xcolor,mathrsfs,marvosym,mathtools}
\usepackage{graphicx,hyperref}
\usepackage{tikz}

\title{An approximate inner ball condition characterizing $C^{1,\alpha}$ convex bodies}
\author{Anthony Cappello}

\address{A.\ Cappello: Department of Mathematics, Niagara University, Niagara University, NY 14109, USA, {\tt cappello@niagara.edu}}

\newcommand{\overbar}[1]{\mkern 1.7mu\overline{\mkern-1.7mu#1\mkern-1.5mu}\mkern 1.5mu}

plus 6pt minus 12pt
\newtheorem{theorem}{Theorem}
\newtheorem{lemma}[theorem]{Lemma}
\newtheorem{corollary}[theorem]{Corollary}
\newtheorem{proposition}[theorem]{Proposition}

\theoremstyle{definition}

\newcommand{\barint}{
\rule[.036in]{.12in}{.009in}\kern-.16in \displaystyle\int }

\newcommand{\barcal}{\mbox{$ \rule[.036in]{.11in}{.007in}\kern-.128in\int $}}

\newcommand{\bbbn}{\mathbb N}

\newcommand{\bbbr}{\mathbb R}
\newcommand{\bbbs}{\mathbb S}
\newcommand{\eps}{\varepsilon}

\def\5{\text{\Saturn}}

\def\dist{\operatorname{dist}}

\def\conv{\operatorname{conv}}

\def\graph{\operatorname{graph}}

\def\epi{\operatorname{epi}}
\def\inte{\operatorname{int}}
\def\co{\operatorname{co}}
\def\AIB{\operatorname{AIB}}

\def\mvint_#1{\mathchoice
          {\mathop{\vrule width 6pt height 3 pt depth -2.5pt
                  \kern -8pt \intop}\nolimits_{\kern -3pt #1}}%
          {\mathop{\vrule width 5pt height 3 pt depth -2.6pt
                  \kern -6pt \intop}\nolimits_{#1}}%
          {\mathop{\vrule width 5pt height 3 pt depth -2.6pt
                  \kern -6pt \intop}\nolimits_{#1}}%
          {\mathop{\vrule width 5pt height 3 pt depth -2.6pt
                  \kern -6pt \intop}\nolimits_{#1}}}

\numberwithin{theorem}{section} \numberwithin{equation}{section}

\begin{document}

\subjclass[2020]{Primary 52A20; Secondary 26B25, 26B35}
\keywords{convex functions, convex body, convex analysis, approximate inner ball condition, H\"older boundary regularity, convex envelope, Minkowski sum}
\sloppy
\begin{abstract}
    The main result of this paper uses an approximate inner ball condition to characterize $C^{1,\alpha}$ convex bodies for $0<\alpha\leq 1$. In particular, we show that a convex body $K$ is of class $C^{1,\alpha}$  if and only if there exist constants $C,\eps_0>0$ such that for every $0<\eps\leq\eps_0$ and for every $x\in\partial K$ there exists $h^\eps(x)$ satisfying $B(h^\eps(x), C\eps^\frac{1-\alpha}{1+\alpha})\subset K$ and $\dist(x,B(h^\eps(x), C\eps^\frac{1-\alpha}{1+\alpha}))\leq\eps.$ We then give a new proof of the regularity of the convex envelope by showing that $\conv(f)\in C^{1,\alpha}_{\rm loc}(\bbbr^n)$ whenever $f\in C^{1,\alpha}_{\rm loc}(\bbbr^n)$ and $f$ satisfies the coercivity condition $f(x)\to\infty$ as $|x|\to\infty$. Finally, we use the approximate inner ball characterization of $C^{1,\alpha}$ convex bodies to prove that the Minkowski sum of a $C^{1,\alpha}$ convex body and a nonempty closed convex set is a $C^{1,\alpha}$ convex domain. 
\end{abstract}

\maketitle


\section{Introduction}
The uniform inner ball condition is a classical geometric characterization of $C^{1,1}$ regularity for convex bodies. In particular, a convex body $K$ has $C^{1,1}$ boundary if and only if for some $r>0$, the set $K$ is the union of closed balls of radius $r$ contained in $K$ as stated in \cite[Proposition 2.4.3]{hormander}; see also \cite{ACH, cannarsa}. This condition is closely related to the theory of positive reach, applied to the boundary of convex bodies, initiated by Federer \cite{federer} and the regularity of the distance function \cite{krantz-parks-distance}. Related formulations involving Lipschitz normals and two-sided rolling ball conditions for general domains appear in \cite{lewicka, lucas, walther}. 

If one wishes to identify a similar characterization for convex bodies whose boundary has $C^{1,\alpha}$ regularity, with $0<\alpha <1$, then the ability to find a uniform inner ball condition is not possible in general. For example, the epigraph of the function $x_n = |x'|^{1+\alpha}$ has no tangent interior ball at the origin. Yet, it is possible to characterize $C^{1,\alpha}$ convex bodies by use of an approximate inner ball condition, where for each point on the boundary of $K$ we can identify closed balls contained in $K$ at a distance of at most $\eps>0$ whose radius is some fixed constant times $\eps^{(1-\alpha)/(1+\alpha)}$.

For a closed convex set $K\subset \bbbr^n$ and $r>0$ we define
\begin{equation}
\label{eq4}
K_{r}:=\{z\in K: \overbar B(z,r)\subset K\}
\quad
\text{and}
\quad
K(r):= K_r + \overbar B(0,r).
\end{equation}
The set $K_r$ is the inner parallel body of $K$ and $K(r)$ is the union of closed balls of radius $r$ contained in $K$. We say that $K$ satisfies $\AIB(r,\eps)$ if $K_r\neq \emptyset$ and 
\begin{equation}
\label{eq1}
\sup_{x\in K} \dist(x,K(r))\leq \eps.
\end{equation}
Equivalently, every $x\in K$ lies within $\eps$ of at least one closed ball of radius $r>0$ contained in $K$, providing motivation for the notation $\AIB$, approximate inner ball. 

The main result of this paper is the following.
\begin{theorem}
\label{thm1}
Let $K\subset \bbbr^n$ be a convex body, let $0<\alpha\leq 1$ and set 
$$
\beta:=\frac{1-\alpha}{1+\alpha}.
$$
The following conditions are equivalent:
    \begin{itemize}
        \item [(i)] $\partial K$ is a $C^{1,\alpha}$ hypersurface.
        \item [(ii)] There exist $C, \eps_0 >0$ such that $K$ satisfies $\AIB(C\eps^\beta, \eps)$ for every $0<\eps\leq \eps_0$.
    \end{itemize}
Moreover, the implication $(ii)\implies (i)$ remains valid if a convex body is replaced by a closed convex domain.
\end{theorem}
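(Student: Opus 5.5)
We prove the two implications separately. In both directions we work locally: near a boundary point, after a rotation, $\partial K$ is the graph of a convex function $u$ over a ball in $\bbbr^{n-1}$, and $K$ lies above this graph. We use the standard fact that a convex function $u$ is $C^{1,\alpha}$ on a ball if and only if the tangent-plane gap
\[
0\le u(y)-u(x)-\langle \nabla u(x),y-x\rangle
\]
is bounded by $M|y-x|^{1+\alpha}$, uniformly in $x$. For the converse direction it suffices that $u$ admits, at every point $x$, a supporting affine function $\ell_x$ with $u-\ell_x\le M|\cdot-x|^{1+\alpha}$; this forces differentiability and the Hölder bound on $\nabla u$ by the usual two-point comparison argument.

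\emph{(i)$\Rightarrow$(ii).} Fix $\eps$ small, set $r=C\eps^\beta$, and fix $x\in\partial K$ with inner normal $\nu$. We take the candidate center $h=x+(r+\eps)\nu$ and show that $\overbar B(h,r)\subset K$; this gives $\dist(x,\overbar B(h,r))=\eps$. Points of $K$ in the interior are handled by moving to the nearest boundary point, or more simply by noting $\dist(z,K(r))\le \dist(\pi(z),K(r))$ for a suitable boundary projection. Cleaner still, convexity of $K(r)$ together with $\partial K\subset K(r)+\overbar B(0,\eps)$ gives the whole of $K$.

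To check the inclusion in local coordinates $u(0)=0$, $\nabla u(0)=0$, we use $u(y)\le M|y|^{1+\alpha}$. Then
\[
\overbar B(h,r)\subset\{t\ge \eps+r-\sqrt{r^2-|y|^2}\},
\]
and it suffices that $M|y|^{1+\alpha}\le \eps+|y|^2/(2r)$ for $|y|\le r$. By weighted AM–GM, $|y|^{1+\alpha}\le c\,\eps^{(1-\alpha)/2}\,(|y|^2/r)^{(1+\alpha)/2}\,r^{(1+\alpha)/2}$. Balancing $\eps$ against $|y|^2/r$ at the scale $|y|^{1+\alpha}\sim\eps$ gives exactly $r\sim\eps^{(1-\alpha)/(1+\alpha)}$, so choosing $C$ small relative to $M$ works. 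Compactness makes $M$ uniform and guarantees that the ball stays in the graph chart when $\eps_0$ is small.

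\emph{(ii)$\Rightarrow$(i).} Fix $x\in\partial K$ and a supporting hyperplane there, and write locally $u\ge0$, $u(0)=0$. For a point $y$ with $s=|y|$ small, consider the boundary point $p=(y,u(y))$. By (ii) there is a ball $\overbar B(h,r)\subset K$ with $r=C\eps^\beta$ and $\dist(p,\overbar B(h,r))\le\eps$. Since $K$ lies above the supporting hyperplane $t\ge0$, the center $h$ has height at least $r$, so $p$ lies within $\eps$ of a ball sitting above $\{t\ge 0\}$. Conversely, the point $x=0$ itself is within $\eps$ of some ball $\overbar B(h',r)\subset K$.

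The step I expect to be the main obstacle is converting this into the estimate $u(y)\le M|y|^{1+\alpha}$ with the correct supporting plane. Plan: use the ball near $0$. Its convex hull with $K$ shows that $K$ contains $\conv(\{0\}\cup \overbar B(h',r))$ up to $\eps$. Every supporting hyperplane at $0$ must then be nearly the one tangent to this ball: its tilt is $O(\sqrt{\eps/r})$, so the normal cone at $0$ has angular width $O(\sqrt{\eps/r})\to0$, giving differentiability. Moreover, the "cone-plus-ball" region forces $u(y)\le \eps+ |y|^2/r$ roughly, for $|y|\lesssim r$, measured against the tangent plane. Choosing $\eps=|y|^{1+\alpha}$ gives $r\sim|y|^{1-\alpha}$, hence $|y|^2/r\sim|y|^{1+\alpha}$ and $u(y)\lesssim|y|^{1+\alpha}$. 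The delicate part is making this precise, since the ball near $0$ is only approximately tangent. I would handle it by showing that the sublevel region $\{t\le \eps\}$ of the ball's lower cap controls $u$, and then iterating over dyadic $\eps$ to fix the tangent plane consistently.

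For closed convex domains, which may be unbounded, the same local argument applies verbatim, since (ii) supplies the balls and compactness is only used in (i)$\Rightarrow$(ii).
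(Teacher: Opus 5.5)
Your forward direction is essentially the paper's argument. You use a ball along the normal where the paper uses the inner ball of the shifted paraboloid $p^\eps_y+\eps$; both lead to the same kind of scalar inequality, and Proposition~\ref{prop3} handles the interior. Two small repairs. Your AM--GM display is misstated (its right-hand side equals $c\,\eps^{(1-\alpha)/2}|y|^{1+\alpha}$); the correct form is $|y|^{1+\alpha}\le(2C)^{(1+\alpha)/2}\bigl(\eps+|y|^2/(2r)\bigr)$ for $r=C\eps^\beta$, so $C\le\frac12 M^{-2/(1+\alpha)}$ works. And for $\alpha=1$ the radius $r=C$ does not shrink with $\eps$, so $C$ itself must be taken below the chart size.

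In the converse, your tilt estimate is exactly Lemma~\ref{lem6} with the center $h'$ in place of $\pi_{K_R}(x)$, and it correctly gives a unique normal. The gap is at the step you flag: the bound $u(y)\lesssim|y|^{1+\alpha}$ is only sketched. Your proposed remedy, iterating over dyadic $\eps$ to fix the tangent plane, is not what is needed.

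Once the normal at $0$ is unique, $K\subset\{t\ge0\}$ relative to \emph{the} tangent plane, and every approximating ball automatically lies above it. Then $h'_n\ge r$ and $|h'|\le r+\eps$ give $|h''|^2\le 2r\eps+\eps^2$ for the horizontal offset. The lower cap of the ball yields, whenever $|y|+|h''|\le r$,
\[
u(y)\le h'_n-\sqrt{r^2-|y-h''|^2}\le \eps+\frac{|y-h''|^2}{r}\le 5\eps+\frac{2\eps^2}{r}+\frac{2|y|^2}{r}.
\]
Now take $\eps=|y|^{1+\alpha}$ and $r=C|y|^{1-\alpha}$. Then $|h''|\le\sqrt{2C}\,|y|+|y|^{1+\alpha}$, so the side condition holds for small $|y|$. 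This gives $u(y)\le(5+4/C)|y|^{1+\alpha}$ for $|y|\le\delta(C,\eps_0,\alpha)$, uniformly in the base point, with no iteration over scales.

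You also still owe the link to your ``standard fact''. The bound above lives in the tangent frame of each base point, whereas that fact concerns the gaps $u(z)-u(x)-\nabla u(x)\cdot(z-x)$ of a single chart function. On a chart with $|\nabla u|\le A$, the two frames distort heights and displacements only by factors depending on $A$, so the transfer costs only constants. Alternatively, you can derive H\"older continuity of the normal directly from the uniform sandwich and invoke Proposition~\ref{prop 5}. Either way, it has to be carried out.

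With these pieces filled in, your converse is a valid and genuinely different proof. It sandwiches $\partial K$ uniformly between each tangent plane and a $(1+\alpha)$-paraboloid, then applies the convexity lemma, which nicely mirrors your forward direction.

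The paper avoids coordinates entirely by choosing the ball canonically, centered at $\pi_{K_R}(x)$. By Lemma~\ref{lem9} the direction $v_x$ is then $1/R$-Lipschitz in $x$. Combining \eqref{eq16} with \eqref{eq11} at $\eps=|x-y|^{1+\alpha}$ compares normals at two points directly and gives global H\"older continuity of $\nu$ in one line.

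The difference is that an arbitrary ball at each point carries no information linking different points. That is why your route must pass through the Taylor-type bound and convexity, while the projection onto $K_R$ supplies that link for free.
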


We note that for $\alpha = 1$, the radius $C\eps^\beta = C$ is independent of $\eps$, so that letting $\eps\searrow 0$ recovers the usual uniform inner ball characterization. 

The proof has two short geometric arguments. In the forward direction, after a change of coordinates, the boundary of $K$ can be represented locally as the graph of a function which lies beneath a $(1+\alpha)$-paraboloid, that is, a function of the form $f(x) = a|x|^{1+\alpha}+b\cdot x + c$. We then show that a radius $C\eps^\beta$ ball is contained in a local epigraph chart of the $(1+\alpha)$-paraboloid, and by restricting further if necessary, contained in the original chart, hence in the interior of $K$, at a distance of at most $\eps$ from the boundary. In the reverse direction, fix $x\in\partial K$ and $R>0$ such that $K_R\neq \emptyset$. We define the unit vector $v_x = (\pi_{K_{R}}(x) - x)/|\pi_{K_{R}}(x) - x|$, which depends on the choice of $R$. Then in Lemma \ref{lem9} we show that $v_x$ is $1/R$-Lipschitz as a function of $x$, that is,
\begin{equation}
\label{eq3}
|v_x - v_y|\leq \frac{|x-y|}{R}.
\end{equation}
Consider any inward supporting unit normal $\nu$ at $x\in\partial K$. We show in Lemma \ref{lem6} that if $K$ satisfies $\AIB(R,\eps)$, then
\begin{equation}
\label{eq2}
|\nu - v_x|^2 \leq \frac{2\eps}{R}.
\end{equation}
Taking $R= C\eps^\beta$ and $\eps = |x-y|^{1+\alpha}$ in \eqref{eq3} and \eqref{eq2} gives the $\alpha$-H\"{o}lder estimate for the normal.

To apply this result to convex functions, we define a similar approximate inner ball characterization for the epigraphs of functions. For a function $f:U\to\bbbr$ and $V\subset U$, letting $\Gamma_f(x) = (x,f(x))$, we say that $\epi(f)$ satisfies $\AIB_V(R,\eps)$ if for every $y\in V$ there is a closed ball $B_y\subset \epi(f)$ of radius $R$ such that $\dist(\Gamma_f(y), B_y)\leq \eps$. Using this definition, we can establish an epigraph form of Theorem \ref{thm1}.
\begin{corollary}
\label{cor1}
Let $f:\bbbr^n\to\bbbr$, $0<\alpha\leq 1$ and $\beta = (1-\alpha)/(1+\alpha)$.
    \begin{itemize}
        \item [(a)] If $f\in C^{1,\alpha}_{\rm loc}(\bbbr^n)$, then for every $x\in\bbbr^n$ and $r>0$ there exist $C, \eps_0>0$ such that $\epi(f)$ satisfies $\AIB_{\overbar B(x,r)}(C\eps^\beta, \eps)$ for every $0<\eps\leq \eps_0$.
        \item [(b)] If $f:\bbbr^n\to\bbbr$ is convex and for every $x\in\bbbr^n$ and $r>0$ there exist $C, \eps_0>0$ such that $\epi(f)$ satisfies $\AIB_{\overbar B(x,r)}(C\eps^\beta, \eps)$ for every $0<\eps\leq \eps_0$, then $f\in C^{1,\alpha}_{\rm loc}(\bbbr^n)$.
    \end{itemize}
\end{corollary}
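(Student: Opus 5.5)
For both parts we would reduce to Theorem \ref{thm1}, or rather to the local arguments behind it, applied to the closed set $\epi(f)$. The point is that the epigraph condition only tests points $\Gamma_f(y)$ with $y$ in a compact ball.

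\textbf{Part (a).} Fix $x$ and $r$, and set $M=\sup_{\overbar B(x,r+1)}|\nabla f|$ and $L=[\nabla f]_{C^{0,\alpha}(\overbar B(x,r+1))}$. For $y\in\overbar B(x,r)$ Taylor's formula gives, for $|z-y|\le 1$,
$$
f(z)\le f(y)+\nabla f(y)\cdot(z-y)+L|z-y|^{1+\alpha}=:P_y(z).
$$
Hence the region above the $(1+\alpha)$-paraboloid $P_y$ over $\overbar B(y,1)$ is contained in $\epi(f)$. Notice that $f$ need not be convex here.

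The plan is to place a ball inside this paraboloid region, exactly as in the forward direction of Theorem \ref{thm1}. Take the unit vector $\nu_y=(-\nabla f(y),1)/\sqrt{1+|\nabla f(y)|^2}$ and center $c=\Gamma_f(y)+(\eps+\rho)\nu_y$ with $\rho=C\eps^\beta$. Checking that $\overbar B(c,\rho)$ lies above $\graph P_y$ reduces, after tilting coordinates (the tilt angle is bounded by $M$), to an estimate of the form $\rho-\sqrt{\rho^2-t^2}+\eps\gtrsim L' t^{1+\alpha}$ for $|t|\le\rho$. Balancing the terms gives $t\sim\eps^{1/(1+\alpha)}$ and $\rho\sim\eps^{(1-\alpha)/(1+\alpha)}$. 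We would also need the horizontal projection of the ball to stay inside $\overbar B(y,1)$, which holds once $\eps_0$ is small. Since $\dist(\Gamma_f(y),\overbar B(c,\rho))=\eps$, this gives $\AIB_{\overbar B(x,r)}(C\eps^\beta,\eps)$. Everything in this step is uniform in $y$ because $M$ and $L$ are uniform on the compact set.

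\textbf{Part (b).} Here $f$ is convex and finite, so $K=\epi(f)$ is a closed convex domain. The implication (ii)$\Rightarrow$(i) of Theorem \ref{thm1} is stated for such sets, but its hypothesis is global, so we would rerun its proof locally. Fix $x$ and $r$, and take $C,\eps_0$ for the ball $\overbar B(x,r+1)$. For $y\in\overbar B(x,r)$ and $R=C\eps^\beta$, the ball $B_y$ is contained in $K$, so $K_R\neq\emptyset$.

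Lemma \ref{lem9} is a global statement about $\pi_{K_R}$, which is $1$-Lipschitz, so $v$ is $1/R$-Lipschitz on all of $\partial K$. Lemma \ref{lem6} only uses the existence of one ball of radius $R$ inside $K$ within $\eps$ of the given point. We would check that its proof is pointwise, and if so it gives $|\nu-v_{\Gamma_f(y)}|^2\le 2\eps/R$ for any inward normal $\nu$ at $\Gamma_f(y)$. Now take $y_1,y_2\in\overbar B(x,r)$ and put $\delta=|\Gamma_f(y_1)-\Gamma_f(y_2)|$, which is at most $(1+\mathrm{Lip}f)|y_1-y_2|$ since $f$ is locally Lipschitz. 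Choosing $\eps=\delta^{1+\alpha}$ when $\delta$ is small, the triangle inequality gives
$$
|\nu_1-\nu_2|\lesssim \sqrt{\eps/R}+\delta/R\lesssim \delta^{\alpha}.
$$
This has two consequences. First, the normal is unique at each point, so $f$ is differentiable. Second, the unit normals $(-\nabla f,1)/\sqrt{1+|\nabla f|^2}$ are $\alpha$-Hölder on $\overbar B(x,r)$, and this transfers to $\nabla f$ because $|\nabla f|$ is bounded there.

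\textbf{Main obstacle.} We expect the hardest step to be the localization in (b): confirming that Lemma \ref{lem6} needs only a nearby ball, with no global $\AIB$ hypothesis. If its proof uses $\dist(x,K(R))\le\eps$ only through a single ball, the argument goes through verbatim. Otherwise we would rederive the estimate as follows. Let $p=\pi_{K_R}(\Gamma_f(y))$. The center $q$ of $B_y$ lies in $K_R$, and $\nu\cdot(z-\Gamma_f(y))\ge R$ for all $z\in K_R$ by the supporting hyperplane. Combining these with $|q-\Gamma_f(y)|\le R+\eps$ and the obtuse-angle property of the projection should reproduce the bound $|\nu-v|^2\le 2\eps/R$.
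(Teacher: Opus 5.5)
Your proposal is correct and follows the paper's route. Part (b) is exactly the paper's argument: Lemma \ref{lem6} is indeed pointwise, needing only one ball $\overbar B(q,R)\subset K$ with $|q-\Gamma_f(y)|\le R+\eps$, and it is combined with Lemma \ref{lem9} on all of $\partial K$, the choice $\eps=|\Gamma_f(y_1)-\Gamma_f(y_2)|^{1+\alpha}$, and the Lipschitz map $\phi(z)=-z'/z_{n+1}$.

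For (a), the paper invokes Lemma \ref{lem 17}, which dominates the $(1+\alpha)$-paraboloid by the quadratic paraboloid $p_y^\eps+\eps$ and uses its exact inner ball from Lemma \ref{lem 16}, so it needs only a vertical shift and no tilted coordinates. Your direct placement of the ball along $\nu_y$ rests on the same balancing inequality and works. When $\alpha=1$, though, you must also take $C$ small (not just $\eps_0$) to keep the ball over $\overbar B(y,1)$.
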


Both Theorem \ref{thm1} and Corollary \ref{cor1} are useful for understanding regularity of convex bodies and functions because balls behave well under convex combinations and translations. We use Corollary \ref{cor1} to recover the Kirchheim-Kristensen regularity theorem for convex envelopes \cite{kirc and kris}; related regularity results can be found in \cite{figalli, rabier, kiselman}. A general modulus version is presented in \cite[Theorem 2.3]{fathi} and used there to prove a $C^{1,1}$ insertion theorem. For PDE
characterizations, regularity, and applications of convex envelopes, see \cite{oberman, oberman-silvestre, pimentel-shahgholian}.

\begin{theorem}
\label{thm2}
Let $f\in C^{1,\alpha}_{\rm loc}(\bbbr^n)$ for $0<\alpha\leq 1$ and $f(x)\to\infty$ as $|x|\to\infty$. Then $\conv(f)\in C^{1,\alpha}_{\rm loc}(\bbbr^n)$.
\end{theorem}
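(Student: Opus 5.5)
The plan is to combine the two halves of Corollary \ref{cor1}: part (a) gives approximate inner balls for $\epi(f)$, and part (b) converts approximate inner balls for $\epi(\conv(f))$ back into regularity. Write $g=\conv(f)$. Coercivity and continuity of $f$ give $g$ finite, convex, $g\le f$, and $g(x)\to\infty$ as $|x|\to\infty$. The one structural fact needed is a local Carathéodory representation. For every $y\in\bbbr^n$ there are points $x_1,\dots,x_{m}$ with $m\le n+1$ and weights $\lambda_i>0$, $\sum\lambda_i=1$, such that
\begin{equation*}
y=\sum_i\lambda_i x_i,\qquad g(y)=\sum_i\lambda_i f(x_i),\qquad g(x_i)=f(x_i).
\end{equation*}
Moreover, for $y\in\overbar B(x,r)$ all $x_i$ lie in a fixed compact ball $\overbar B(0,\rho)$ depending only on $x,r$.

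I would prove this first. Take a finite affine minorant $\ell\le g$, which exists since $g$ is convex and finite. Coercivity of $f$ then shows that the points realizing (near-)infima in $g(y)=\inf\{\sum\lambda_i f(x_i)\}$ with non-negligible weight cannot escape to infinity. A compactness argument, using continuity of $f$, then yields a minimizing configuration with uniformly bounded $x_i$. The equality $g(x_i)=f(x_i)$ follows since $g\le f$ and, by convexity of $g$, the inequality $\sum\lambda_i g(x_i)\ge g(y)=\sum\lambda_i f(x_i)$ holds. I expect this step to be the main technical obstacle. The issue is handling weights $\lambda_i\to0$ attached to far-away points; it is dealt with by using $f(x_i)\to\infty$ faster than any linear bound, through the coercive lower bound, to discard such points in the limit.

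Next, fix $x,r$ and let $\rho$ be as above. Apply Corollary \ref{cor1}(a) to $f$ on $\overbar B(0,\rho)$ to get $C,\eps_0$. For each $0<\eps\le\eps_0$ and each $x_i$ this gives a closed ball $B_i=\overbar B(c_i,R)\subset\epi(f)$ with $R=C\eps^\beta$ and $\dist(\Gamma_f(x_i),B_i)\le\eps$. Let $p_i\in B_i$ be a point with $|p_i-\Gamma_f(x_i)|\le\eps$. Set $c=\sum\lambda_i c_i$ and $B=\overbar B(c,R)=\sum_i\lambda_i B_i$, the Minkowski combination of equal-radius balls. Every point of $B$ has the form $\sum\lambda_i q_i$ with $q_i\in B_i\subset\epi(f)$. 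Since $\epi(g)=\overline{\conv}(\epi f)$, which here is simply the convex hull, these points lie in $\epi(g)$, so $B\subset\epi(g)$. Also $\sum\lambda_i p_i\in B$, and
\begin{equation*}
\Bigl|\sum_i\lambda_ip_i-\Gamma_g(y)\Bigr|=\Bigl|\sum_i\lambda_i\bigl(p_i-\Gamma_f(x_i)\bigr)\Bigr|\le\eps,
\end{equation*}
because $\Gamma_g(y)=\sum\lambda_i\Gamma_f(x_i)$ by the representation. Hence $\epi(g)$ satisfies $\AIB_{\overbar B(x,r)}(C\eps^\beta,\eps)$ for all $0<\eps\le\eps_0$, with the same constants.

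Since $g$ is convex and $x,r$ were arbitrary, Corollary \ref{cor1}(b) gives $g=\conv(f)\in C^{1,\alpha}_{\rm loc}(\bbbr^n)$, which proves Theorem \ref{thm2}.
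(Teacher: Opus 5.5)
There is a genuine gap in your ``local Carath\'eodory representation.'' The theorem only assumes $f(x)\to\infty$ as $|x|\to\infty$. Your way of discarding far-away points with vanishing weights relies instead on $f$ growing ``faster than any linear bound,'' i.e.\ on superlinear growth $f(x)/|x|\to\infty$, which is not given.

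Under mere coercivity the representation is false. Take $n=1$ and $f(x)=\log(1+x^2)$, which is smooth and coercive. A nonconstant convex function on $\bbbr$ grows at least linearly on one side, so every convex $g\le f$ is constant, and $\conv(f)\equiv 0$. But for $y\neq 0$ every admissible combination has some $x_i\neq 0$ with $\lambda_i>0$. Hence $\sum\lambda_i f(x_i)>0$ and the infimum is never attained. Along any minimizing sequence, the barycenter $y$ is carried in the limit by points escaping to infinity with $\lambda_i\to 0$ but $\lambda_i x_i\not\to 0$, so these points cannot be dropped.

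So neither attainment nor a uniform bound $\rho$ is available. You therefore cannot place an inner ball of $\epi(f)$ at every $x_i$, since Corollary \ref{cor1}(a) supplies balls only over a fixed compact set. The same example also refutes your side claims that $\conv(f)$ is coercive and that $\epi(\conv(f))$ equals $\co(\epi(f))$. Only the inclusion $\co(\epi(f))\subset\epi(\conv(f))$ is actually needed, and that one is fine.

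The missing idea is that one bounded point suffices. Normalize $f\ge 0$ and order the weights so that $\lambda_1\ge 1/(n+1)$. Then $\lambda_1 f(x_1)\le\sum_i\lambda_i f(x_i)$ bounds $f(x_1)$ uniformly for $y$ in a compact set, so coercivity puts $x_1$ in a fixed ball $B(0,M)$.

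Attach the inner ball only at $x_1$ and keep the other points as singletons. Then $\lambda_1\overbar B(h_1,R)+\sum_{i\ge2}\lambda_i\Gamma_f(x_i)$ is a ball of radius $\lambda_1 R\ge R/(n+1)$ inside $\epi(\conv(f))$, within $\lambda_1\eps$ of $\sum_i\lambda_i\Gamma_f(x_i)$.

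Since attainment is unavailable, run this along a minimizing sequence. The barycenters $\sum_i\lambda_i^k\Gamma_f(y_i^k)$ converge to $\Gamma_{\conv(f)}(y)$ even when the $y_i^k$ with $i\ge 2$ escape. So the centers and near points stay bounded, and a subsequence yields a limiting ball in the closed set $\epi(\conv(f))$.

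This is exactly Lemma \ref{lem8}. The cost is the factor $1/(n+1)$ in the radius, which is harmless for Corollary \ref{cor1}(b). Your full-radius version with unchanged constants is correct only under the stronger superlinear growth hypothesis, where a minimizing representation with uniformly bounded points does exist.
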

The new proof of Theorem \ref{thm2} presents a geometrically motivated argument. 
We also obtain an elementary and short proof of the preservation of $C^{1,\alpha}$ regularity under Minkowski addition of a convex body with boundary of class $C^{1,\alpha}$ and a nonempty closed convex set, a result established by Krantz and Parks \cite[Theorem 3]{KrantzP}; see also \cite{kiselman2}.
\begin{theorem}
\label{thm3}
If $A\subset\bbbr^n$ is a convex body with boundary of class $C^{1,\alpha}$, $0<\alpha\leq 1$, and $B\subset\bbbr^n$ is any nonempty closed convex set, then $A+B$ is a convex domain with boundary of class $C^{1,\alpha}$.
\end{theorem}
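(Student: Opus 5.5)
The plan is to use the equivalence in Theorem \ref{thm1}. Minkowski sums of balls are balls after a translation, so the approximate inner ball condition passes from $A$ to $A+B$. Then the direction (ii)$\implies$(i), which Theorem \ref{thm1} explicitly allows for closed convex domains, gives the regularity of $A+B$.

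The first task is to show that $A+B$ is a closed convex domain. Convexity is immediate. Since $A$ is a convex body it has nonempty interior, so $A+B \supset A+b$ has nonempty interior for any $b\in B$. Closedness follows because $A$ is compact and $B$ is closed: if $a_k+b_k\to z$, pass to a subsequence with $a_k\to a$; then $b_k\to z-a$, which lies in $B$ because $B$ is closed. So $A+B$ is closed.

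Next, by (i)$\implies$(ii) of Theorem \ref{thm1} applied to $A$, there are constants $C,\eps_0>0$ such that $A$ satisfies $\AIB(C\eps^\beta,\eps)$ for all $0<\eps\le\eps_0$. Fix such an $\eps$ and set $r=C\eps^\beta$. I will check that $A+B$ satisfies $\AIB(r,\eps)$ with the same constants.

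\begin{itemize}
\item[(1)] \emph{Nonempty inner parallel body.} Since $A_r\neq\emptyset$, take $z\in A_r$ and any $b\in B$. Then $\overbar B(z+b,r)=\overbar B(z,r)+b\subset A+B$, so $(A+B)_r\neq\emptyset$.
\item[(2)] \emph{The distance estimate.} Let $w=a+b\in A+B$ with $a\in A$, $b\in B$. By $\AIB(r,\eps)$ for $A$ there is a point $p\in A(r)$ with $|a-p|\le\eps$, where $p\in\overbar B(z,r)$ for some $z$ with $\overbar B(z,r)\subset A$. Translating by $b$ gives $\overbar B(z+b,r)\subset A+B$. Hence $z+b\in(A+B)_r$, and $p+b\in(A+B)(r)$. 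Moreover $|w-(p+b)|=|a-p|\le\eps$. Taking the supremum over $w$ gives \eqref{eq1} for $A+B$.
\end{itemize}

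One small point is that the supremum in \eqref{eq1} is attained-or-approached through the distance to the set $A(r)$. I will use that $\dist(a,A(r))\le\eps$ yields points of $A(r)$ within $\eps+\delta$ of $a$ for every $\delta>0$. In fact $A(r)$ is closed by compactness, so a nearest point exists and no $\delta$ is needed.

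Finally, apply Theorem \ref{thm1} (ii)$\implies$(i) in its closed convex domain form to conclude that $\partial(A+B)$ is a $C^{1,\alpha}$ hypersurface. The only genuine subtlety is that $A+B$ may be unbounded, which is exactly why the paper records that (ii)$\implies$(i) holds for closed convex domains. Beyond that, the argument consists of the closedness check and the translation invariance of balls.
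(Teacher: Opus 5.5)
Your proof is correct and is essentially the paper's argument: each approximate inner ball $\overbar B(h,C\eps^\beta)\subset A$ near $a$ is translated by $b\in B$ to give an approximate inner ball for $a+b$ in $A+B$, and then (ii)$\implies$(i) of Theorem~\ref{thm1} is applied in its closed convex domain form. Your explicit checks that $A+B$ is closed and that $(A+B)_r\neq\emptyset$ just fill in details the paper states without proof.
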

The approximate inner ball characterization given in Theorem \ref{thm1} allows us to avoid the need to write the boundaries in coordinates, providing a simple one-paragraph proof for this result.

The paper is organized as follows. Section 2 contains the notation and preliminary material necessary to read and understand the results and their proofs. Sections 3 and 4 prove the two implications in Theorem \ref{thm1}. Section 5 supplies the proof of Corollary \ref{cor1}. Section 6 provides the details for the regularity of the convex envelope and Section 7 provides the details for the preservation of regularity under Minkowski addition.


\section{Preliminaries}
We denote the $n$-dimensional open (closed) ball centered at $x\in\bbbr^n$ with radius $r>0$ as $B^n(x,r)$ ($\overbar B^n(x,r)$) and when context is clear we denote the open (closed) ball as $B(x,r)$ ($\overbar B(x,r)$). Let $a,r>0$ and $x, h\in\bbbr^n$. It is easy to verify that
$$
aB(h,r)+\{x\} = \{ay+x: y\in B(h,r)\} = B(ah + x, ar).
$$
The graph of $f:U\to\bbbr$ is denoted and defined by $\graph(f):=\{(x,f(x)):x\in U\}$ and we denote specific points on the graph as $\Gamma_f(x) = (x,f(x))$.
We define the distance between the point $x\in\bbbr^n$ and the set $K\subset\bbbr^n$ by:
$$
\dist(x,K) = \inf_{y\in K} |x-y|
$$
and we note that if $K$ is nonempty and closed, then this distance is achieved at some $y\in K$. The line segment from $x\in\bbbr^n$ to $y\in\bbbr^n$ is denoted as $[x,y]$.

We say a set $A$ is convex if for all $x,y\in A $, we have $[x,y]\subset A$. Moreover we denote the convex hull of $A\subset\bbbr^n$ as $\co(A)$ and define it as
$$
\co(A) := \left\{\sum_{i=1}^{m} \lambda_ix_i : \sum_{i=1}^{m} \lambda_i = 1,\, \lambda_i \geq 0, \,x_i\in A,\,m\in\bbbn\right\}.
$$
The convex hull can be thought of as the set of convex combinations of points of $A$, but in fact at most $n+1$ points of $A$ are required as demonstrated in Carath\'{e}odory's Theorem, see \cite[Chapter A, Theorem 1.3.6]{hiriart}. 

\begin{theorem}[Carath\'{e}odory's Theorem]
\label{thm 14}
For any subset $A\subset \bbbr^n$, its convex hull admits the representation
$$
\co(A) = \left\{\sum_{i=1}^{n+1} \lambda_ix_i : \sum_{i=1}^{n+1} \lambda_i = 1,\, \lambda_i \geq 0, \,x_i\in A\right\}.
$$
\end{theorem}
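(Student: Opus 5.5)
We prove Carath\'{e}odory's Theorem by showing that any convex combination of $m>n+1$ points of $A$ can be rewritten as a convex combination of $m-1$ of those points; iterating then reduces every element of $\co(A)$ to a combination of at most $n+1$ points. Padding with zero coefficients (or repeating a point) gives exactly $n+1$ terms, which yields the inclusion of $\co(A)$ in the right-hand side. The reverse inclusion is immediate from the definition of $\co(A)$ with $m=n+1$.

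For the reduction, let $x=\sum_{i=1}^m\lambda_ix_i$ with $\lambda_i\ge0$, $\sum_i\lambda_i=1$ and $m\ge n+2$. We may assume every $\lambda_i>0$, since otherwise we simply drop a term. The $m-1\ge n+1$ vectors $x_2-x_1,\dots,x_m-x_1$ in $\bbbr^n$ are linearly dependent. Hence there are scalars $\mu_1,\dots,\mu_m$, not all zero, with
$$
\sum_{i=1}^m\mu_ix_i=0,\qquad \sum_{i=1}^m\mu_i=0,
$$
where $\mu_1$ is chosen as minus the sum of the others. Because the $\mu_i$ sum to zero and are not all zero, some $\mu_i>0$.

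Set $t=\min\{\lambda_i/\mu_i:\mu_i>0\}$, attained at some index $j$, and define $\lambda_i'=\lambda_i-t\mu_i$. Then:
\begin{itemize}
\item $\sum_i\lambda_i'=1$ and $\sum_i\lambda_i'x_i=x$, by the two relations above;
\item $\lambda_i'\ge0$ for every $i$: if $\mu_i\le 0$ this is clear since $t>0$, and if $\mu_i>0$ it follows from the minimality of $t$;
\item $\lambda_j'=0$.
\end{itemize}
So $x$ is a convex combination of the $m-1$ points $x_i$ with $i\ne j$. Induction on $m$ finishes the argument.

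The only delicate step is choosing $t$ so that nonnegativity is preserved while one coefficient vanishes. The minimum ratio over the positive $\mu_i$ handles both requirements at once, and the rest is routine.
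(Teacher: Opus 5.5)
Your proof is correct: it is the standard affine-dependence reduction, in which the minimum-ratio choice of $t$ kills one coefficient while keeping the others nonnegative, combined with the trivial reverse inclusion and padding by a repeated point. The paper gives no proof of its own and instead cites Hiriart-Urruty and Lemar\'echal (Chapter A, Theorem 1.3.6), whose proof is essentially this same argument.
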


If $C\subset\bbbr^n$ is a nonempty closed convex set, then for every $x\in\bbbr^n$, there is a unique point, called the metric projection of $x$ onto $C$, denoted by $\pi_C(x)$, such that
\begin{equation}
\label{eq 17}
\pi_C(x)\in C
\qquad
\text{and}
\qquad
|x-\pi_C(x)|=\dist(x,C).
\end{equation}
We then have the following simple property of the metric projection.
\begin{proposition}
\label{prop2}
Let $C$ be nonempty, closed, and convex. Then for all $x\in\bbbr^n$ and $z\in C$,
$$
\langle x-\pi_C(x), z- \pi_C(x)\rangle \leq 0.
$$
\end{proposition}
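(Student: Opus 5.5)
The plan is the standard variational argument: use the fact that $\pi_C(x)$ minimizes $|x-z|^2$ over $z\in C$, and perturb along segments inside $C$. Write $p=\pi_C(x)$ and fix $z\in C$. For $t\in[0,1]$, the point $p_t:=p+t(z-p)=(1-t)p+tz$ lies in $C$ by convexity. By the minimizing property \eqref{eq 17},
$$
|x-p|^2\leq |x-p_t|^2 \qquad\text{for all } t\in[0,1].
$$

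The next step is to expand the right-hand side:
$$
|x-p_t|^2=|x-p|^2-2t\langle x-p, z-p\rangle+t^2|z-p|^2 .
$$
Combining this with the inequality above gives
$$
2t\langle x-p,z-p\rangle\leq t^2|z-p|^2 .
$$
For $t\in(0,1]$ we divide by $2t$ and obtain $\langle x-p,z-p\rangle\leq \tfrac{t}{2}|z-p|^2$. Letting $t\searrow 0$ then yields $\langle x-\pi_C(x), z-\pi_C(x)\rangle\leq 0$, which is the claim.

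No step here is a real obstacle. The only thing to be careful about is that the perturbation must stay inside $C$. That is why we move along the segment $[p,z]$, which convexity guarantees is contained in $C$, and restrict to $t\in[0,1]$. We also divide only by $t>0$ before passing to the limit. Existence and uniqueness of $\pi_C(x)$ are taken as given from the paragraph preceding the proposition.
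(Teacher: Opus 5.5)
Your proof is correct and matches the paper's argument: both perturb $\pi_C(x)$ along the segment toward $z$, which stays in $C$ by convexity. Both then expand $|x-((1-t)\pi_C(x)+tz)|^2-|x-\pi_C(x)|^2 = t^2|z-\pi_C(x)|^2-2t\langle x-\pi_C(x),z-\pi_C(x)\rangle\geq 0$, divide by $t>0$, and let $t\to 0^+$.
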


\begin{proof}
For any $z\in C$, by convexity, $tz + (1-t)\pi_C(x)\in C$ for $t\in(0,1)$ and hence by \eqref{eq 17}
$$
0 \leq |x - (tz + (1-t)\pi_C(x))|^2 - |x-\pi_C(x)|^2 = t^2|z-\pi_C(x)|^2 - 2t \langle x-\pi_C(x), z- \pi_C(x)\rangle.
$$
Dividing by $t>0$ and letting $t\to 0^+$ completes the proof.
\end{proof}

If $C$ is nonempty, closed, and convex, then for any $x\notin C$, the unit vector $\frac{x-\pi_C(x)}{|x-\pi_C(x)|}$ acts as an outer supporting normal to $C$ at $\pi_C(x)$. The next lemma establishes Lipschitz properties for numerous maps involving the metric projection.
\begin{lemma}
\label{lem9}
Let $C\subset\bbbr^n$ be nonempty, closed, and convex. Then both $\pi_C$ and $\operatorname{Id} - \pi_C$ are 1-Lipschitz. Moreover if $x,y\in\bbbr^n$ and $R>0$ are such that $\dist(x,C), \dist(y,C) \geq R$, then
\begin{equation}
\label{eq16}
\left|\frac{x-\pi_C(x)}{|x-\pi_C(x)|} - \frac{y-\pi_C(y)}{|y-\pi_C(y)|}\right| \leq \frac{1}{R}|x-y|.
\end{equation}
\end{lemma}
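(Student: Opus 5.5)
We prove the three assertions in turn, relying only on Proposition \ref{prop2}. Throughout, write $p=\pi_C(x)$ and $q=\pi_C(y)$.

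\emph{Firm nonexpansiveness.} Apply Proposition \ref{prop2} at $x$ with $z=q\in C$, and at $y$ with $z=p\in C$:
$$
\langle x-p,\, q-p\rangle\le 0,\qquad \langle y-q,\, p-q\rangle\le 0 .
$$
Adding these gives $\langle (x-p)-(y-q),\, q-p\rangle\le 0$, which rearranges to
$$
|p-q|^2\le \langle x-y,\, p-q\rangle .
$$

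\emph{Both maps are $1$-Lipschitz.} Set $u=p-q$ and $w=(x-p)-(y-q)$, so that $x-y=u+w$. The inequality above says $|u|^2\le\langle u+w,u\rangle$, that is, $\langle u,w\rangle\ge 0$. Hence
$$
|x-y|^2=|u|^2+|w|^2+2\langle u,w\rangle\ge |u|^2+|w|^2 .
$$
This gives $|u|\le|x-y|$ and $|w|\le|x-y|$, so $\pi_C$ and $\operatorname{Id}-\pi_C$ are both $1$-Lipschitz.

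\emph{The estimate \eqref{eq16}.} Let $a=x-p$ and $b=y-q$; by hypothesis $|a|,|b|\ge R$. The naive bound is
$$
\left|\frac{a}{|a|}-\frac{b}{|b|}\right|\le \frac{2|a-b|}{\max(|a|,|b|)}\le \frac{2}{R}|x-y|,
$$
which loses a factor of $2$. To remove it, we use the standard identity
$$
\left|\frac{a}{|a|}-\frac{b}{|b|}\right|^2=\frac{|a-b|^2-(|a|-|b|)^2}{|a|\,|b|},
$$
which one checks by expanding both sides: each equals $\dfrac{2|a||b|-2\langle a,b\rangle}{|a||b|}$.

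Since $|a|\,|b|\ge R^2$, it suffices to prove
$$
|a-b|^2-(|a|-|b|)^2\le |x-y|^2 .
$$
From the previous step, $|x-y|^2\ge |u|^2+|w|^2$ with $w=a-b$, so it is enough to show $(|a|-|b|)^2\ge 0$, which always holds. Thus the numerator is at most $|w|^2\le|x-y|^2$, and \eqref{eq16} follows.

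The only step that needs care is recovering the sharp constant $1/R$ instead of $2/R$. The identity for the difference of unit vectors, combined with the firm nonexpansiveness inequality, handles it.
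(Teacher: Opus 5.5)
Your proof is correct and follows essentially the same route as the paper. Both arguments derive firm nonexpansiveness from Proposition \ref{prop2} and then use the identity $|a-b|^2=(|a|-|b|)^2+|a|\,|b|\,\bigl|\tfrac{a}{|a|}-\tfrac{b}{|b|}\bigr|^2$ (the paper's \eqref{eq14} combined with \eqref{eq 62}), together with $|a|\,|b|\ge R^2$ and the $1$-Lipschitz property of $\operatorname{Id}-\pi_C$; the only difference is cosmetic, in that you get both Lipschitz bounds at once from $|x-y|^2\ge|u|^2+|w|^2$ rather than from two separate Cauchy--Schwarz steps.
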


\begin{proof}
To prove $\pi_C$ is 1-Lipschitz, apply Proposition \ref{prop2} using $z=\pi_C(y)$, switch the roles of $x$ and $y$ and apply Proposition \ref{prop2} again to get the inequalities
$$
\langle x-\pi_C(x), \pi_C(y) - \pi_C(x)\rangle\leq 0,
\quad
\langle y-\pi_C(y), \pi_C(x) - \pi_C(y)\rangle\leq 0.
$$
Combining these two inequalities yields 
\begin{equation}
\label{eq 61}
\langle x-\pi_C(x) - (y-\pi_C(y)), \pi_C(y) - \pi_C(x)\rangle \leq 0
\end{equation}
and we simplify to get
$$
|\pi_C(y)-\pi_C(x)|^2\leq \langle x-y, \pi_C(x)-\pi_C(y)\rangle.
$$
Then the Cauchy-Schwarz inequality shows $\pi_C$ is 1-Lipschitz. Then applying \eqref{eq 61} we have
\begin{align}
\langle x-\pi_C(x) - (&y-\pi_C(y)), x-y\rangle \notag \\
&= |x-\pi_C(x) - (y-\pi_C(y))|^2 + \langle x-\pi_C(x) - (y-\pi_C(y)), \pi_C(x) - \pi_C(y)\rangle \notag \\ 
&\geq |x-\pi_C(x) - (y-\pi_C(y))|^2 \notag
\end{align}
and applying Cauchy-Schwarz shows that $\operatorname{Id}-\pi_C$ is 1-Lipschitz.

To prove \eqref{eq16} let $a = |x-\pi_C(x)|$, $b = |y-\pi_C(y)|$, and $c = \left\langle \frac{x-\pi_C(x)}{|x-\pi_C(x)|}, \frac{y-\pi_C(y)}{|y-\pi_C(y)|} \right\rangle$. Then,
\begin{equation}
\label{eq 62}
\left|\frac{x-\pi_C(x)}{a} - \frac{y-\pi_C(y)}{b}\right|^2 = 2(1-c)
\end{equation}
and we also have
\begin{equation}
\label{eq14}
|x-\pi_C(x) - (y-\pi_C(y))|^2 = a^2 + b^2 - 2abc = (a - b)^2 +2ab(1-c)\geq 2ab(1-c).
\end{equation}
Given that $a, b \geq R$, \eqref{eq14} and \eqref{eq 62} imply
$$
|x-\pi_C(x) - (y-\pi_C(y))|^2 \geq 2R^2(1-c) = R^2 \left|\frac{x-\pi_C(x)}{a} - \frac{y-\pi_C(y)}{b}\right|^2.
$$
Finally, as $\operatorname{Id}-\pi_C$ is 1-Lipschitz, this proves \eqref{eq16}.
\end{proof}

For $U\subset \bbbr^n$ convex, we say a function $f:U\to\bbbr$ is convex if for every $x,y\in U$ and $t\in [0,1]$,
$$
f(tx+(1-t)y)\leq tf(x) + (1-t)f(y).
$$
Thus we can connect convex functions with convex sets by use of the epigraph of a function $f:U\to\bbbr$, which is defined as
$$
\epi(f) = \{(x,t)\in U\times \bbbr:t\geq f(x)\}.
$$
It is a standard result, and easy to prove, that if $U$ is convex, then $f$ is convex if and only if $\epi(f)$ is convex \cite[Theorem 1.37]{morduk}, and hence many results applying to convex sets can be applied to convex functions.

We say $K$ is a convex body if $K$ is convex, compact, and has non-empty interior. Similarly we define a convex domain as a convex set with nonempty interior. Given a closed convex domain $K$, for each $x\in \partial K$, there exists an inner supporting unit normal $\nu\in\bbbs^{n-1}$ such that
\begin{equation}
\label{eq13}
\langle y-x, \nu \rangle \geq 0
\quad
\text{ for all }
y\in K
\end{equation}
and we define a supporting hyperplane of $K$ at $x$ as a hyperplane through $x$ with inward supporting unit normal $\nu$. Thus, for $U\subset\bbbr^n$ convex, we can define a supporting hyperplane of a convex function $f:U\to\bbbr$ at $x\in U$ to be a supporting hyperplane of $\epi(f)$ at $\Gamma_f(x)$. 

We recall the sets $K_r$ and $K(r)$, which are defined in \eqref{eq4}. The next proposition then clarifies that these sets are also convex bodies for sufficiently small $r$.
\begin{proposition}
\label{prop 9}
Let $K\subset\bbbr^n$ be a convex body such that $K$ contains a ball of radius $r_0>0$. Then $K_r$ and $K(r)$ are both convex bodies for all $r\in (0,r_0)$.
\end{proposition}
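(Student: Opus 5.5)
We must show that $K_r$ and $K(r)$ are convex, compact, and have nonempty interior for every $r\in(0,r_0)$. The plan is to handle $K_r$ first and then obtain $K(r)$ from it through the Minkowski sum.

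\emph{Convexity and compactness of $K_r$.} Write $K_r=\{z: z+\overbar B(0,r)\subset K\}=\bigcap_{u\in\overbar B(0,r)}(K-u)$. This is an intersection of translates of the closed convex set $K$, so $K_r$ is closed and convex. It is also contained in $K$, hence bounded, and therefore compact. Convexity can also be checked directly: if $\overbar B(z_1,r)$ and $\overbar B(z_2,r)$ lie in $K$, then for $t\in[0,1]$ we have $\overbar B(tz_1+(1-t)z_2,r)=t\overbar B(z_1,r)+(1-t)\overbar B(z_2,r)\subset K$ by convexity of $K$.

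\emph{Nonempty interior of $K_r$.} By hypothesis there is a ball $B(h,r_0)\subset K$. We take it closed, shrinking $r_0$ slightly if needed; since $r<r_0$ strictly, this costs nothing. For any $z\in B(h,r_0-r)$ the triangle inequality gives $\overbar B(z,r)\subset B(h,r_0)\subset K$. Hence $B(h,r_0-r)\subset K_r$, so $K_r$ has nonempty interior. The strict inequality $r<r_0$ is used exactly here.

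\emph{The set $K(r)$.} We have $K(r)=K_r+\overbar B(0,r)$, a Minkowski sum of two compact convex sets. It is therefore convex, and it is compact because it is the continuous image of $K_r\times\overbar B(0,r)$ under addition. It contains $K_r$, so it has nonempty interior; indeed it contains $B(h,r_0)$. Finally, $K(r)\subset K$ by the definition of $K_r$, so $K(r)$ is a convex body.

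None of these steps is a real obstacle. The only point needing care is the interior of $K_r$, which relies on $r<r_0$ together with the ball-inclusion argument above.
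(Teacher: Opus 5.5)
Your proof is correct and follows essentially the same route as the paper. It uses the same three ingredients: $K_r=\bigcap_{u}(K-u)$ for closedness, the inclusion $B(h,r_0-r)\subset K_r$ for the interior, and the Minkowski sum of two compact convex sets for $K(r)$; your convexity check via $t\overline{B}(z_1,r)+(1-t)\overline{B}(z_2,r)$ matches the paper's $\overline{B}(z,r)\subset\co(\overline{B}(x,r)\cup\overline{B}(y,r))$, and shrinking $r_0$ to get a closed ball is unnecessary, since your triangle-inequality argument already gives $\overline{B}(z,r)\subset B(h,r_0)$ for the open ball.
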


\begin{proof}
Let $x,y\in K_r$. We show that $[x,y]\subset K_r$. Clearly, 
$\overbar{B}(x,r),\overbar{B}(y,r)\subset K$ and for any $z\in [x,y]$,
$
\overbar{B}(z,r)\subset \operatorname{co}(\overbar{B}(x,r)\cup\overbar{B}(y,r))\subset K.
$
Thus, $\dist(z,\partial K)\geq r$ and $z\in K_r$ imply $[x,y]\subset K_r$ showing $K_r$ is convex. Moreover, if $B(x,r_0)\subset K$, then for any $r\in (0,r_0)$ the ball $B(x,r_0-r)\subset K_r$ proving that the interior of $K_r$ is non-empty. Finally, as $K_r = \bigcap_{z\in \overbar B(0,r)} K-\{z\}$, we have that $K_r$ is compact. We conclude the proof by noting that $K(r)$ is convex and compact being the sum of two convex compact sets and as $K(r)$ contains a ball it has nonempty interior.
\end{proof}

For $U\subset\bbbr^n$ open, a function $f:U\to\bbbr$ is $C^{1,\alpha}(U)$ for $\alpha\in (0,1]$ if $f\in C^1(U)$ and $\nabla f$ is $\alpha$-H\"{o}lder continuous (Lipschitz continuous when $\alpha =1$) on $U$, that is, there exists $L>0$ such that for all $x,y\in U$,
$$
|\nabla f(x)- \nabla f(y)| \leq L|x-y|^\alpha.
$$
We call $L$ a H\"{o}lder constant of $\nabla f$. Similarly, we say $f\in C^{1,\alpha}_{\rm loc}(U)$ if for every $x\in U$ and $\delta>0$ such that $\overbar B(x,\delta)\subset U$, we have $f\in C^{1,\alpha}(B(x,\delta))$. 

For $x\in\bbbr^n$, $v\in\bbbs^{n-1}$, and $R, M>0$, we define the open bounded cylinder
$$
C(x, v, R, M):=\{x+u+tv: u\in \{v\}^\perp,\,|u|<R, \text{ and } |t|<M\}
$$
and when $R = M$ we simply write $C(x, v, M)$. We also denote the point $x = (x',x_n)\in\bbbr^n$, where $x'\in\bbbr^{n-1}$. Using this notation we then say for a domain $U\subset \bbbr^n$, the boundary of $U$ is a hypersurface of $\bbbr^n$, if for each $x\in \partial U$, there exists $v\in\bbbs^{n-1}$ and constants $R, M>0$ such that after a rotation and translation we have a function $f_x:B^{n-1}(0,R)\to \bbbr$ satisfying
$$
C(0, e_n, R, M) \cap \partial U = \{(y',f_x(y')):y'\in B^{n-1}(0',R)\}
$$
$$
C(0, e_n, R, M) \cap  \overbar U = \{(y',t): y'\in B^{n-1}(0',R),\, f_x(y')\leq t < M  \}.
$$
The function $f_x$ combined with the sets $C(0, e_n, R, M) \cap \partial U$ and $C(0, e_n, R, M) \cap U$ are called the charts of $\partial U$. Simply put, the boundary of a domain is a hypersurface when its boundary can be represented locally as the graph of a function. If $f_x$ is convex for each $x\in \partial U$, then we say $\partial U$ is a convex hypersurface. If $f_x\in C^1$ for each $x\in \partial U$, then $\partial U$ is a $C^1$ hypersurface. If $f_x\in C^{1,\alpha}$ for $0<\alpha \leq 1$ and for all $x\in\partial U$, then $\partial U$ is a $C^{1,\alpha}$ hypersurface. For convex bodies (or closed convex domains) we say the boundary is of class $C^{1,\alpha}$ if the boundary is a $C^{1,\alpha}$ hypersurface. 

The next proposition states that the boundary of a closed convex domain is a convex hypersurface, that is, it can be locally represented as the graph of a convex function. 
\begin{proposition}
\label{prop5}
For a closed convex domain $K\subset \bbbr^n$, and for any $x\in \partial K$, there exists $v\in\bbbs^{n-1}$ and $M, R>0$ such that $\partial K\cap C(x, v, R, M)$ is the graph of a convex function defined on an open ball in a supporting hyperplane of $K$ at $x$ whose inward unit normal is $v$.
\end{proposition}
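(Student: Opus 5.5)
Fix $x\in\partial K$ and an interior point $p$ with $\overbar B(p,\rho)\subset K$. By \eqref{eq13} there is an inner supporting unit normal $\nu$ at $x$; set $v=\nu$ and let $H=\{y:\langle y-x,v\rangle=0\}$ be the corresponding supporting hyperplane. Since $p$ is interior, $\langle p-x,v\rangle>0$. The argument will produce the convex function from the graph of $\partial K$ over a small ball in $H$. To do this, we aim to see $\partial K$ from $H$ along the direction $v$.

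\textbf{Step 1: a suitable direction.} If $p-x$ is parallel to $v$, work directly with $v$. In general, first replace $v$ by the direction $w=(p-x)/|p-x|$, which points strictly into $K$. The cleanest route, though, is to keep $v$ and use the fact that the cone $\co(\{x\}\cup \overbar B(p,\rho))\subset K$ contains a segment in direction $v$ starting at $x$. This follows by moving $p$ slightly: the point $q=x+s v$ lies in $\inte K$ for small $s>0$, because $x+s v$ is a convex combination of $x$ and a point of $B(p,\rho)$ once $s$ is small relative to $\rho\langle p-x,v\rangle/|p-x|$. Hence choose $M>0$ and $R>0$ small so that $\overbar B(x+Mv,2R)\subset \inte K$.

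\textbf{Step 2: definition of the graph function.} For $u\in\{v\}^\perp$ with $|u|<R$, consider the line $t\mapsto x+u+tv$. The point at $t=M$ is interior. Points with $t<0$ are not in $K$, since $\langle y-x,v\rangle<0$ there, by \eqref{eq13}. Define
$$
f(u)=\inf\{t\ge 0: x+u+tv\in K\}\in[0,M).
$$
Then $\{t\in(-M,M): x+u+tv\in K\}=[f(u),M)$ by convexity and closedness. Moreover, every point $x+u+tv$ with $f(u)<t<M$ is interior, because it lies strictly between a point of $K$ and the interior ball around $x+Mv$. So the boundary points in the cylinder are exactly the graph points. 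This gives $\partial K\cap C(x,v,R,M)=\{x+u+f(u)v\}$, and the corresponding description of $K\cap C$. Shrinking $R$ further, we may in addition need $f(u)<M$, which holds since $x+u+Mv\in\inte K$.

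\textbf{Step 3: convexity of $f$.} The epigraph of $f$ over $B^{n-1}(0,R)$, truncated at height $M$, is the intersection of the convex set $K$ with the convex cylinder, expressed in the coordinates $(u,t)$. Hence the sublevel structure is convex: for $u_1,u_2$ and $t_i\ge f(u_i)$ (with $t_i<M$), the combination $\lambda(x+u_1+t_1v)+(1-\lambda)(x+u_2+t_2v)$ lies in $K$. This gives $f(\lambda u_1+(1-\lambda)u_2)\le \lambda t_1+(1-\lambda)t_2$. Letting $t_i\downarrow f(u_i)$ yields convexity.

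The main obstacle is Step 1/Step 2's interiority claim: ensuring the open segment above the boundary point is interior, uniformly for $|u|<R$. This is handled by the strict-interior ball $\overbar B(x+Mv,2R)$ and the standard fact that $[a,b)\subset\inte K$ whenever $a\in K$ and $b\in\inte K$.
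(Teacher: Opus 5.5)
\textbf{There is a genuine gap in Step 1.} Your Step 1 rests on a false claim, and this is exactly where the proposition has content. For an arbitrary inner supporting unit normal $v$ at $x$, the point $x+sv$ need not lie in $\inte K$ for any $s>0$. If $x+sv=(1-\lambda)x+\lambda b$ with $b\in B(p,\rho)$, then $v$ is parallel to $b-x$. So $v$ must make an angle less than $\arcsin(\rho/|p-x|)$ with $p-x$. This condition does not depend on $s$, so taking $s$ small does not help.

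Concretely, let $K=[0,1]^2$, $x=0$, $v=e_1$. Since $\langle y-x,e_1\rangle=y_1\geq 0$ on $K$, $v$ is an inner supporting normal, yet $x+se_1\in\partial K$ for all $s\in[0,1]$. For this $v$ the conclusion fails for every $R,M>0$: the set $C(x,e_1,R,M)\cap\partial K$ contains the segment $\{(t,0):0\leq t<\min\{M,1\}\}$, which lies over the single point $u=0$ of $\{e_1\}^\perp$, so it is not a graph. Hence $v$ has to be chosen among the supporting normals. Your alternative of replacing $v$ by $(p-x)/|p-x|$ does not work either. That vector is in general not a supporting normal, so the hyperplane is no longer supporting, and Step 2 loses the fact that points with $t<0$ lie outside $K$.

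\textbf{What is missing, and how to supply it.} You need an inner supporting unit normal $v$ at $x$ with $x+sv\in\inte K$ for some $s>0$. The paper's proof builds exactly this into its normalization, arranging simultaneously $te_n\in\inte K$ and $K\subset\{x_n\geq 0\}$. One way to justify that such a $v$ exists:
\begin{enumerate}
\item Testing \eqref{eq13} at $y=p-\rho\nu$ gives $\langle \nu,p-x\rangle\geq\rho$ for every unit inner normal $\nu$ at $x$. So the compact convex hull of the set $N_1$ of unit inner normals at $x$ does not contain $0$.
\item Let $a^*$ be the point of $\co(N_1)$ of least norm and set $v=a^*/|a^*|$. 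Then $v$ is an inner supporting normal at $x$.
\item Proposition \ref{prop2}, applied to the projection of $0$ onto $\co(N_1)$, gives $\langle\nu,v\rangle\geq|a^*|>0$ for all $\nu\in N_1$.
\item Suppose the ray $\{x+sv:s>0\}$ missed the open convex set $\inte K$. A hyperplane separating the two would, after letting $s\to0^+$, produce some $\nu\in N_1$ with $\langle\nu,v\rangle\leq 0$, a contradiction.
\end{enumerate}
With this $v$, and $M,R>0$ chosen so that $\overbar B(x+Mv,2R)\subset\inte K$, your Steps 2 and 3 go through as written and agree with the paper's argument.
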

\begin{proof}
For each $x\in\partial K$ we rotate and translate the convex domain so that $x=0$, $te_n\in\inte K$ for some value of $t>0$ and $K$ is contained in the upper half space $\{x\in\bbbr^n:x_n\geq 0\}$, implying $e_n$ is an inner supporting unit normal. Then we can find $R, M>0$ so that $B^{n-1}(0,R)\times \{M\}\subset \inte K$ and thus the cylinder $C(0,e_n, R, M)$ intersects $\partial K$ in such a way that the projection of $C(0,e_n, R, M)\cap \partial K$ onto the plane $x_n =0$ is $B^{n-1}(0,R)$. Then we note that the function $f:B^{n-1}(0,R)\to\bbbr$ defined as
$$
f(x) = \inf \{x_n :(x, x_n)\in K\}
$$
is finite, being bounded below by the plane $x_n=0$, and achieves its infimum as $K$ is closed. Moreover, as $K$ is convex so too is $f$. 
\end{proof}
It is well known that a convex function $f$ is differentiable at $x\in\inte U$ if and only if $f$ has a unique supporting hyperplane at $x$ \cite[Theorem 25.1]{rockafellar}, and if $f$ is differentiable and convex on the open convex set $U\subset\bbbr^n$, then $f$ is $C^1(U)$ \cite[IV Theorem E]{roberts}. Therefore we have the following characterization of $C^1$ convex domains.
\begin{proposition}
\label{prop 2}
A convex domain $K\subset \bbbr^n$ has a unique supporting hyperplane at each $x\in \partial K$ if and only if $\partial K$ is a $C^1$ hypersurface.
\end{proposition}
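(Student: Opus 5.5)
The plan is to reduce both directions to local statements about the convex functions given by Proposition \ref{prop5}, and then use the two facts quoted just before the statement: a convex function is differentiable at an interior point if and only if it has a unique supporting hyperplane there \cite[Theorem 25.1]{rockafellar}, and a convex function that is differentiable on an open convex set is automatically $C^1$ there \cite[IV Theorem E]{roberts}.

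For the forward implication, assume $K$ has a unique supporting hyperplane at each boundary point. Fix $x\in\partial K$ and take the chart from Proposition \ref{prop5}. This gives a cylinder $C(x,v,R,M)$ and a convex function $f$ on $B^{n-1}(0,R)$ whose graph is $\partial K\cap C(x,v,R,M)$. The set $K\cap C$ coincides with the part of $\epi(f)$ inside $C$, since Proposition \ref{prop5} gives $\overline{K}\cap C=\{(y',t): f(y')\le t<M\}$.

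The step that needs care is showing that, for each $y'$ in the open ball, supporting hyperplanes of $\epi(f)$ at $\Gamma_f(y')$ and of $K$ at $(y',f(y'))$ are the same thing. One direction is immediate: a supporting hyperplane of $K$ restricts to one of $\epi(f)$.

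For the converse, let $H$ support $\epi(f)$ at the point $p=(y',f(y'))$. Then $H$ supports the convex set $K\cap C$ at $p$, because $K\cap C\subset\epi(f)$. Since $p$ lies in the open set $C$, $H$ supports $K$ locally near $p$. For a convex set, local support implies global support. Indeed, if some $z\in K$ lay strictly on the wrong side of $H$, the segment $[p,z]$ would lie in $K$. Points of that segment close to $p$ lie in $C$, hence in $K\cap C$, and they are strictly on the wrong side, which is a contradiction. Also, $H$ is non-vertical because $f$ is finite on an open ball, so $H$ is the graph of an affine minorant.

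Hence uniqueness of the supporting hyperplane of $K$ gives uniqueness for $f$ at every $y'$. By \cite{rockafellar}, $f$ is differentiable on the ball, and by \cite{roberts} it is $C^1$. So every chart is $C^1$ and $\partial K$ is a $C^1$ hypersurface.

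For the reverse implication, assume $\partial K$ is a $C^1$ hypersurface, and fix $x\in\partial K$. I would use the convex chart from Proposition \ref{prop5} rather than the arbitrary $C^1$ chart. The two charts describe the same set $\partial K$ near $x$, but only the convex chart's graph function is known to be convex.

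The main obstacle is making sure the convex chart's function is differentiable. I would argue via a tangent cone. The given $C^1$ chart shows that near $x$ the set $K$ is the region above a $C^1$ graph. Hence the tangent cone of $K$ at $x$ is a half-space, bounded by the tangent hyperplane. Any supporting hyperplane of $K$ at $x$ must contain this cone on one side, and the only hyperplane through $x$ with a half-space on one side is its boundary. So the supporting hyperplane at $x$ is unique, which is exactly the statement to be proved.

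If one instead wants differentiability of the convex chart function directly, it now follows from the equivalence established in the forward implication together with \cite{rockafellar}.
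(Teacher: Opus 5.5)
Your proof is correct. The paper gives no separate argument: it presents the proposition as an immediate consequence of Proposition \ref{prop5} and the two cited facts (\cite[Theorem 25.1]{rockafellar} and \cite[IV Theorem E]{roberts}), applied to the local graph function in both directions.

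Your forward implication follows that route. You also make explicit the step the paper leaves tacit, namely that supporting hyperplanes of $K$ at $(y',f(y'))$ and of $\epi(f)$ at $\Gamma_f(y')$ coincide, via local-to-global support along segments and non-verticality over an open ball. The same segment argument, run inside the convex set $\epi(f)$ (which agrees with $\overline K$ on $C$), also justifies the direction you called immediate. If $K$ is not closed, run it in $\overline K$, whose supporting hyperplanes at boundary points are those of $K$.

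Your reverse implication takes a genuinely different route. Instead of applying Rockafellar's theorem to a convex chart, you identify the tangent cone of $K$ at $x$ from the given chart $g$ as the half-space $\{d: d_n\ge \nabla g(0)\cdot d'\}$. This forces the inner normal to be $(-\nabla g(0),1)/\sqrt{1+|\nabla g(0)|^2}$. The argument needs only differentiability of $g$ at the single point and no convex-analysis citation. It also produces the explicit formula $\nu=N(\nabla g)$, which the paper later uses without justification in the proof of Proposition \ref{prop 5}. The paper's route, by contrast, is shorter and symmetric.

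The obstacle you flagged in the reverse direction is not actually present. Any chart of a convex domain, in the sense of the paper's definition, is automatically convex: $\overline K\cap C$ is convex and equals the region above $\graph g$ inside $C$, so convex combinations of graph points lie above the graph. So the paper's route could equally apply Rockafellar's theorem directly to your $g$.
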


It is equivalent to say that a closed convex domain $K$ has boundary of class $C^{1,\alpha}$ if and only if $K$ has $C^1$ boundary and the inner unit normal vector of $K$ is locally $\alpha$-H\"{o}lder continuous. 
\begin{proposition}
\label{prop 5}
Let $K\subset \bbbr^n$ be a closed convex domain and let $0<\alpha\leq 1$. The boundary $\partial K$ is of class $C^{1,\alpha}$ if and only if for every $x\in\partial K$, there exists a unique inward unit supporting normal $\nu(x)$, where $\nu(x)$ is locally $\alpha$-H\"{o}lder on $\partial K$.
\end{proposition}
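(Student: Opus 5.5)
We prove the two directions separately. Both work in the local convex chart supplied by Proposition~\ref{prop5}, and we use Proposition~\ref{prop 2} to pass between uniqueness of supporting hyperplanes and $C^1$ regularity.

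\emph{Forward direction.} Suppose $\partial K$ is $C^{1,\alpha}$. Fix $x_0\in\partial K$ and choose a chart $f:B^{n-1}(0,R)\to\bbbr$, with $f\in C^{1,\alpha}$, so that $\partial K\cap C(0,e_n,R,M)=\graph(f)$ and $\overbar K$ lies above the graph. Since $f$ is $C^1$, each boundary point has exactly one supporting hyperplane, namely the tangent plane. In the chart, the inner unit normal at $\Gamma_f(y')$ is
$$
\nu(\Gamma_f(y'))=\frac{(-\nabla f(y'),1)}{\sqrt{1+|\nabla f(y')|^2}} .
$$
The map $y'\mapsto\nabla f(y')$ is $\alpha$-H\"older. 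The map $p\mapsto (-p,1)/\sqrt{1+|p|^2}$ is smooth, hence Lipschitz on bounded sets. Shrinking $R$ so that $\nabla f$ stays bounded, the composition is $\alpha$-H\"older in $y'$. The map $\Gamma_f(y')\mapsto y'$ is $1$-Lipschitz, since it is a coordinate projection. Hence $\nu$ is $\alpha$-H\"older on $\partial K\cap C(0,e_n,R,M)$, which is a neighborhood of $x_0$ in $\partial K$. This gives the local H\"older property.

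\emph{Reverse direction.} Suppose every boundary point has a unique inner normal $\nu$, and $\nu$ is locally $\alpha$-H\"older. By Proposition~\ref{prop 2}, $\partial K$ is a $C^1$ hypersurface. Fix $x_0$ and take the chart from Proposition~\ref{prop5} with $v=\nu(x_0)=e_n$. The chart function $f$ is convex. Uniqueness of the supporting hyperplane makes $f$ differentiable, hence $C^1$, and $\nabla f(0)=0$. Inverting the formula above gives
$$
\nabla f(y')=-\frac{\nu'}{\nu_n},
$$
where $\nu=(\nu',\nu_n)$ is evaluated at $\Gamma_f(y')$. By continuity of $\nabla f$, we shrink $R$ so that $|\nabla f|\le 1$ on $B^{n-1}(0,R)$. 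Then $\nu_n\ge 1/\sqrt2$ there, and we also arrange that $\nu$ is $\alpha$-H\"older with constant $L$ on this piece of $\partial K$. The map $(\nu',\nu_n)\mapsto -\nu'/\nu_n$ is Lipschitz on the region $\{\nu_n\ge 1/\sqrt2\}$ of the unit sphere. The map $y'\mapsto\Gamma_f(y')$ is Lipschitz with constant $\sqrt2$, because $f$ is $1$-Lipschitz there. Composing these three maps,
$$
|\nabla f(y')-\nabla f(z')|\le C\,|\Gamma_f(y')-\Gamma_f(z')|^\alpha\le C'|y'-z'|^\alpha .
$$
So $f\in C^{1,\alpha}$ on a smaller ball, and the boundary is $C^{1,\alpha}$.

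\emph{Main obstacle.} The only delicate point is bookkeeping. The H\"older and Lipschitz constants must be local, and the chart direction must be taken as the normal at the base point so that $\nu_n$ stays bounded away from $0$. Both are handled by shrinking the chart radius, using continuity of $\nabla f$ and the locality of the H\"older condition.
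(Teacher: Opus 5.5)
Your proof is correct and follows the paper's argument essentially step for step. The forward direction uses $\nu\circ\Gamma_f = N(\nabla f)$, with $N(u)=(-u,1)/\sqrt{1+|u|^2}$ Lipschitz and $|y'-z'|\le|\Gamma_f(y')-\Gamma_f(z')|$; the reverse direction uses the convex chart and $\nabla f=\phi(\nu\circ\Gamma_f)$, with $\phi(z)=-z'/z_n$ Lipschitz where $\nu_n$ is bounded below and $\Gamma_f$ Lipschitz, and the only differences are cosmetic (the paper notes $N$ is globally $1$-Lipschitz and uses a generic gradient bound $A$ instead of your normalization $|\nabla f|\le 1$ via $\nabla f(0)=0$).
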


\begin{proof}
We define the map $N:\bbbr^{n-1}\to\bbbr^{n}$ by
$$
N(u):= \frac{(-u,1)}{\sqrt{1+|u|^2}}.
$$
Then we have $\Vert DN\Vert \leq 1$, so that $N$ is a 1-Lipschitz map. Therefore for any $U$ open and $f\in C^{1,\alpha}(U)$, with a H\"{o}lder constant of $L>0$, the inward normal of the graph of $f$ at $\Gamma_f(x)$ can be represented as $\nu(\Gamma_f(x)) = N(\nabla f(x))$ showing that for all $x,y\in U$,
$$
|\nu(\Gamma_f(x)) - \nu(\Gamma_f(y))|\leq |\nabla f(x) -\nabla f(y)|\leq L|x-y|^\alpha \leq L|\Gamma_f(x) - \Gamma_f(y)|^\alpha
$$
showing that the normal $\nu$ is locally H\"{o}lder on the graph of $f$. The forward direction follows by writing the boundary in charts and applying this argument. For the converse statement, the uniqueness of $\nu(x)$ shows that $\partial K$ is $C^1$ by Proposition \ref{prop 2}. Fix $x\in\partial K$, and by rotating and translating so that $x=0$ and $e_n$ is an inward supporting unit normal, we apply Proposition \ref{prop5} providing the existence of $M, R>0$ such that $C(0, e_n, 2R, M)\cap \partial K$ is the graph of $f\in C^1(B^{n-1}(0',2R))$. Also, by decreasing $R>0$ if necessary, we can assume that $\nu$ is $\alpha$-H\"{o}lder continuous on $C(0, e_n, 2R, M)\cap \partial K$. Restricting $f$ to $B^{n-1}(0',R)$, there exists $A>0$ such that $|\nabla f(x')|<A$ for all $x'\in B^{n-1}(0',R)$. Then, for all $x',y'\in B^{n-1}(0',R) $
\begin{equation}
\label{eq 53}
|\Gamma_f(x') - \Gamma_f(y')|^2 = |x' - y'|^2 + |f(x') - f(y')|^2 \leq (1+A^2)|x'-y'|^2
\end{equation}
showing that $\Gamma_f$ is $\sqrt{1+A^2}$-Lipschitz on $B^{n-1}(0',R)$. Moreover, if we denote $\nu=(\nu', \nu_n)$, then
\begin{equation}
\label{eq 63}
\nu_n(\Gamma_f(x')) = N_n(\nabla f(x')) = \frac{1}{\sqrt{1+|\nabla f(x')|^2}} \geq \frac{1}{\sqrt{1+A^2}}>0.
\end{equation}
Then we see that the map $\phi(z) = -z'/z_n$ has bounded derivative, and hence is Lipschitz, on $\{z\in\bbbr^n: |z|\leq 1,\, z_n \geq (1+A^2)^{-1/2}\}$. Therefore, given that $\phi$ and $\Gamma_f$ are Lipschitz and $\nu$ is $\alpha$-H\"{o}lder continuous, and noting
$$
\nabla f(x') = -\frac{\nu'(\Gamma_f(x'))}{\nu_n(\Gamma_f(x'))} = \phi(\nu(\Gamma_f(x')))
$$
we then have that $f\in C^{1,\alpha}(B^{n-1}(0',R))$ as desired.
\end{proof}

\section{The Forward Implication of Theorem \ref{thm1}}
\label{S3}
To show that a $C^{1,\alpha}$ convex body satisfies the desired approximate inner ball condition, we need only show that the boundary satisfies the approximate inner ball condition.
\begin{proposition}
\label{prop3}
Let $K$ be a convex body and suppose that $R>0$ satisfies $K_{R}\neq \emptyset$. If $\sup_{x\in\partial K}\dist(x, K(R))\leq \eps$, then $K$ satisfies $\AIB(R,\eps)$. 
\end{proposition}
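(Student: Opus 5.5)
We must show that every $x\in K$, not just every $x\in\partial K$, lies within $\eps$ of $K(R)$. The plan is to exploit convexity of $K(R)$: the function $g(x):=\dist(x,K(R))$ is convex on $\bbbr^n$, because the distance to a nonempty closed convex set is convex. Here $K(R)=K_R+\overbar B(0,R)$ is nonempty, closed and convex, being the sum of a compact convex set (Proposition \ref{prop 9}, or directly since $K_R$ is a closed convex subset of the compact set $K$) and a closed ball. So it suffices to express each interior point of $K$ as a convex combination of boundary points. Convexity of $g$ then gives $g(x)\le \max$ of $g$ at those boundary points, which is at most $\eps$.

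The steps are as follows.
\begin{itemize}
\item[1.] Check that $K(R)$ is closed and convex. It is convex as a Minkowski sum of convex sets. It is compact as the continuous image of the compact set $K_R\times\overbar B(0,R)$; compactness of $K_R$ follows from $K_R=\bigcap_{z\in\overbar B(0,R)}(K-\{z\})$, as in Proposition \ref{prop 9}. It is nonempty because $K_R\ne\emptyset$.
\item[2.] Show that $g=\dist(\cdot,K(R))$ is convex. Given $x,y$, take nearest points $p,q\in K(R)$. Then $tp+(1-t)q\in K(R)$, so
$$g(tx+(1-t)y)\le |t(x-p)+(1-t)(y-q)|\le t\,g(x)+(1-t)g(y).$$
\item[3.] Fix $x\in\inte K$ and any direction $v\in\bbbs^{n-1}$. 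The line $x+sv$ meets $\partial K$ at two points $x_+=x+s_+v$ and $x_-=x-s_-v$ with $s_\pm>0$, since $K$ is compact and convex and $x$ is interior. Then $x$ lies on the segment $[x_-,x_+]$, so $g(x)\le\max\{g(x_-),g(x_+)\}\le\eps$ by the hypothesis on $\partial K$.
\end{itemize}
Points of $\partial K$ are handled by the hypothesis directly, so $\sup_{x\in K}\dist(x,K(R))\le\eps$. Together with $K_R\neq\emptyset$, this is exactly $\AIB(R,\eps)$ as defined in \eqref{eq1}.

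There is no serious obstacle. The only points needing care are the existence of the two boundary points in Step 3, which uses boundedness of $K$, and the closedness of $K(R)$ in Step 1, which is needed so that nearest points exist in Step 2. Without closedness one could instead argue with near-minimizers in Step 2. Boundedness is precisely why the statement is made for convex bodies rather than for closed convex domains.
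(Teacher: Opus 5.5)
Your proof is correct and is essentially the paper's argument. The paper observes that $\partial K\subset K(R)+\overbar B(0,\eps)$, which is a convex set, and concludes using $\co(\partial K)=K$. Your convexity of $\dist(\cdot,K(R))$ is the same as convexity of that sublevel set, and your Step 3 (writing each interior point as a point of a segment joining two boundary points) simply proves $\co(\partial K)=K$ explicitly, where the paper takes it for granted.
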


\begin{proof}
Since $\partial K\subset K(R) +\overbar B(0,\eps)$ and $\co(\partial K) = K$, by convexity, we have $K\subset K(R) + \overbar B(0,\eps)$ so that 
$
\sup_{x\in K} \dist (x, K(R))\leq \eps.
$
\end{proof}

The following Taylor estimate gives, at each point of the graph of $f$, a touching $(1+\alpha)$-paraboloid lying in $\epi(f)$.
\begin{lemma}
\label{lem1}
Let $f\in C^{1,\alpha}(B(z,r))$ with $\sup\left\{\frac{|\nabla f(x) - \nabla f(y)|}{|x-y|^\alpha}: x,y \in B(z,r),\, x\neq y\right\}\leq L$. Then for all $x,y\in B(z,r)$, 
$$
f(x)\leq \frac{L}{1+\alpha}|x-y|^{1+\alpha} +\nabla f(y)\cdot (x-y) + f(y).
$$
\end{lemma}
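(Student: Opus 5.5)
This is the standard first-order Taylor estimate with integral remainder; note that $B(z,r)$ is convex. Fix $x,y\in B(z,r)$. The whole segment $[y,x]$ lies in $B(z,r)$, so $f$ is $C^1$ along it. Hence $g(t):=f(y+t(x-y))$ is $C^1$ on $[0,1]$ with
$$
g'(t)=\nabla f(y+t(x-y))\cdot(x-y).
$$
By the fundamental theorem of calculus,
$$
f(x)-f(y)-\nabla f(y)\cdot(x-y)=\int_0^1\bigl(\nabla f(y+t(x-y))-\nabla f(y)\bigr)\cdot(x-y)\,dt .
$$

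Next, bound the integrand with Cauchy--Schwarz and the H\"older hypothesis. The supremum in the statement is taken over pairs in $B(z,r)$, and both $y+t(x-y)$ and $y$ belong to $B(z,r)$. So the integrand is at most
$$
L|t(x-y)|^\alpha|x-y| = Lt^\alpha|x-y|^{1+\alpha}.
$$
Integrating gives $\int_0^1 t^\alpha\,dt = 1/(1+\alpha)$, hence
$$
f(x)\le f(y)+\nabla f(y)\cdot(x-y)+\frac{L}{1+\alpha}|x-y|^{1+\alpha}.
$$
If $x=y$ the inequality is trivial.

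There is no real obstacle here. The only points to check are that the segment stays in the ball, which is what makes the H\"older bound applicable at every point of the path, and that $g$ is $C^1$ so the fundamental theorem of calculus applies. Convexity of $f$ is not needed for this lemma.
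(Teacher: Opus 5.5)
Your proof is correct and is the same as the paper's: the fundamental theorem of calculus along the segment from $y$ to $x$, subtracting $\nabla f(y)\cdot(x-y)$, then Cauchy--Schwarz with the H\"older bound, and finally $\int_0^1 t^\alpha\,dt = 1/(1+\alpha)$. You also say explicitly that the segment stays in the convex ball $B(z,r)$, which the paper uses without comment.
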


\begin{proof}
The Fundamental Theorem of Calculus implies
$$
f(x) - f(y) = \int_0^1 \nabla f((1-t)y + tx)\cdot (x-y)\,dt
$$
and using the H\"{o}lder continuity of the derivative we have for all $x,y\in B(z,r)$,
\begin{align}
    f(x) - f(y) - \nabla f(y)\cdot(x-y)
    &= \int_0^1 (\nabla f((1-t)y + tx) - \nabla f(y))\cdot(x-y)\,dt \notag \\
    &\leq \int_0^1 |\nabla f((1-t)y + tx) - \nabla f(y)||x-y|\,dt \notag \\
    &\leq L|x-y| \int_0^1 |(1-t)y+tx - y|^\alpha\,dt \notag \\
    &= \frac{L}{1+\alpha}|x-y|^{1+\alpha}. \notag
\end{align}

\end{proof}
While we are concerned with the approximate inner ball condition in this paper, we need the uniform inner ball condition that the paraboloid satisfies. 
\begin{lemma}
\label{lem 16}
Let $a > 0$, $b\in\bbbr^n$, $c\in\bbbr$, and define the paraboloid $p:\bbbr^n\to\bbbr$ by $p(x) = a|x|^2 + b\cdot x + c$. For every $\Gamma_p(x)$, there exists $h(x)\in \epi(p)$ such that $\Gamma_p(x)\in\overbar B\left(h(x), \frac{1}{2a}\right)\subset \epi(p)$.
\end{lemma}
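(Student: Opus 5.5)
The plan is to reduce to the model paraboloid $q(x)=a|x|^2$, where the ball can be written down explicitly, and then move that ball around by an affine change of coordinates that preserves balls.

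\textbf{Step 1: reduction.} Completing the square gives
$$
p(x)=a\left|x+\tfrac{b}{2a}\right|^2+c-\tfrac{|b|^2}{4a}.
$$
So $\epi(p)$ is the image of $\epi(q)$ under the translation $(y,t)\mapsto (y-\tfrac{b}{2a},\,t+c-\tfrac{|b|^2}{4a})$. Translations preserve balls and their radii, so it suffices to treat $p=q$.

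\textbf{Step 2: the candidate center.} For $q(x)=a|x|^2$, the inward unit normal of $\epi(q)$ at $\Gamma_q(x)$ is
$$
\nu(x)=\frac{(-2ax,1)}{\sqrt{1+4a^2|x|^2}} .
$$
I will set $h(x)=\Gamma_q(x)+\tfrac{1}{2a}\nu(x)$. Then $\Gamma_q(x)\in\overbar B(h(x),\tfrac1{2a})$ trivially, since it lies on the boundary sphere. Also $h(x)\in\epi(q)$ will follow once the ball is shown to lie in $\epi(q)$.

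\textbf{Step 3: containment (main step).} I need that every $(y,t)$ with $|(y,t)-h(x)|\le \frac1{2a}$ satisfies $t\ge a|y|^2$. The natural route is to show that the lower hemisphere of the ball lies above the graph. The radius $\frac1{2a}$ is exactly the minimal radius of curvature of the paraboloid, attained at the vertex, so the estimate will be tight at $x=0$ and strict elsewhere.

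I would try a direct computation first. Write $w=(y,t)$ and $P=\Gamma_q(x)$. Consider
$$
g(w)=t-a|y|^2 .
$$
The goal is $g\ge 0$ on the ball. Since $g$ is concave, its minimum over the ball is attained on the sphere. I then plan to use the Lagrange condition: at a minimizer, $\nabla g=(-2ay,1)$ is parallel to $h-w$, pointing inward. This forces $w$ to be the point of the sphere where the inward normal of the level set $\{g=g(w)\}$ points to $h$.

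Let $s=|(-2ay,1)|$. Then
$$
h=w+\tfrac1{2a}\,\frac{(-2ay,1)}{s}.
$$
The map $y\mapsto$ "normal line foot" for the family of translated paraboloids $\{t=a|y|^2+\lambda\}$ should be injective. This is because $\frac1{2a}$ does not exceed the radius of curvature everywhere, so normal segments of length at most $\frac1{2a}$ do not cross. Injectivity forces $w=P$ and $\lambda=0$, hence $g\ge0$.

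As a backup to the Lagrange argument, I would check the inequality $t\ge a|y|^2$ directly. Parametrize the lower sphere, reduce to the 2-plane spanned by $e_n$ and $x$ (using rotational symmetry about the $t$-axis), and verify the resulting one-variable inequality. That inequality amounts to the osculating-circle comparison for the parabola $t=as^2$, whose curvature is $2a(1+4a^2s^2)^{-3/2}\le 2a$. I expect this curvature comparison to be the main obstacle, specifically making the non-crossing or injectivity claim rigorous.
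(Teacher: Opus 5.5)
Your argument is correct, and it takes a different route from the paper's. Both proofs reduce to $q(x)=a|x|^2$ by the same translation. The paper then simply asserts that the medial axis of $\graph(q)$ is the half-line $\{0\}\times(\frac1{2a},\infty)$, at distance at least $\frac1{2a}$ from the graph. It leaves implicit the reach-type step from this to the containment of the ball.

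You instead verify containment directly:
\begin{itemize}
\item Concavity of $g$ puts a minimizer $w=(y,a|y|^2+\lambda)$ on the sphere.
\item The KKT condition has a strictly positive multiplier, since $\partial_t g=1$, so it gives $h(x)=F(y,\lambda)$, where
\[
F(y,\lambda)=(y,\,a|y|^2+\lambda)+\frac{1}{2a}\,\frac{(-2ay,1)}{\sqrt{1+4a^2|y|^2}} .
\]
\item By construction $h(x)=F(x,0)$, so injectivity of $F$ forces $(y,\lambda)=(x,0)$, hence $\min g=g(\Gamma_q(x))=0$.
\end{itemize}

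The step you flag as the obstacle closes in one line. However, your stated reason for it is only a local principle: a radius below the curvature radius does not by itself prevent normals from crossing for a general surface. It works here for a specific reason. The horizontal component of $F$ is independent of $\lambda$ and equals $\psi(|y|)\,y/|y|$, with $\psi(s)=s-s(1+4a^2s^2)^{-1/2}$, and
\[
\psi'(s)=1-(1+4a^2s^2)^{-3/2}=1-\frac{\kappa(s)}{2a}>0 \qquad (s>0),
\]
where $\kappa$ is the curvature you wrote down. So $\psi$ is strictly increasing with $\psi(0)=0$. The horizontal component therefore determines $y$, and the vertical component then determines $\lambda$. Your curvature comparison thus becomes global through one-variable monotonicity along rays, and the planar backup is unnecessary.

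The paper's version buys brevity and a link to Federer's positive reach. Yours is self-contained and actually proves the non-crossing property that the paper takes as evident from ``examining the behavior of the normal line.''
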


\begin{proof}
We first assume that $p$ is of the form $p(x) = a|x|^2$. By examining the behavior of the normal line at every point on the graph of $p$ we can see that the set of points which do not have a unique metric projection onto the graph of $p$, defined as the medial axis, is equal to the set $\left\{x\in\bbbr^{n+1}: x_1=\cdots = x_{n} = 0 \text{ and } x_{n+1} > \frac{1}{2a}\right\}$. Moreover, every point on the medial axis is at distance at least $1/(2a)$ from $\graph(p)$. The result follows by noting that $p(x) = a|x|^2 + b\cdot x + c$ is a translated paraboloid of the form $a|x|^2$.
\end{proof}
The main tool of the proof of the forward implication for Theorem \ref{thm1} comes from the next lemma.

\begin{lemma}
\label{lem 17}
For $0<\alpha\leq 1$, let $f\in C^{1,\alpha}(B^{n-1}(0,2r))$ for some $r>0$ and let $\beta = (1-\alpha)/(1+\alpha)$. Then there exist constants $C,\eps_0>0$ such that for all $0<\eps\leq \eps_0$, the  set $\epi(f)$ satisfies $\AIB_{B^{n-1}(0,r)}(C\eps^\beta,\eps)$.
\end{lemma}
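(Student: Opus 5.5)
Fix $y\in B^{n-1}(0,r)$. Let $L$ be a H\"older constant of $\nabla f$ on $B^{n-1}(0,2r)$, and set $A:=\sup_{B^{n-1}(0,r)}|\nabla f|$, which is finite after shrinking to a slightly smaller ball or assuming $f$ extends to the closure. By Lemma \ref{lem1}, for $x\in B^{n-1}(0,2r)$,
$$
f(x)\leq q_y(x):=\tfrac{L}{1+\alpha}|x-y|^{1+\alpha}+\nabla f(y)\cdot(x-y)+f(y).
$$
Hence $\epi(q_y)\cap (B^{n-1}(0,2r)\times\bbbr)\subset\epi(f)$. It therefore suffices to place a closed ball of radius $C\eps^\beta$ inside $\epi(q_y)$, with horizontal projection inside $B^{n-1}(y,r)\subset B^{n-1}(0,2r)$, at distance at most $\eps$ from $\Gamma_f(y)=\Gamma_{q_y}(y)$. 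Subtracting the affine part only shears the picture; to avoid distortion I will instead dominate $q_y$ by a quadratic paraboloid on a small ball.

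\textbf{Key comparison.} For $\delta>0$ and $|x-y|\leq\delta$ we have $|x-y|^{1+\alpha}\leq \delta^{\alpha-1}|x-y|^2+\delta^{1+\alpha}$, since for $s\le\delta$ one has $s^{1+\alpha}\le \delta^{1+\alpha}$. So on $B^{n-1}(y,\delta)$,
$$
q_y\leq p_\delta(x):=a_\delta|x-y|^2+\nabla f(y)\cdot(x-y)+f(y)+\tfrac{L}{1+\alpha}\delta^{1+\alpha},
\qquad a_\delta:=\tfrac{L}{1+\alpha}\delta^{\alpha-1}.
$$
By Lemma \ref{lem 16}, at the point $\Gamma_{p_\delta}(y)=\Gamma_f(y)+\tfrac{L}{1+\alpha}\delta^{1+\alpha}e_n$ there is a closed ball of radius $\rho=1/(2a_\delta)=c\,\delta^{1-\alpha}$ in $\epi(p_\delta)$ touching that point. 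This ball lies at distance at most $\tfrac{L}{1+\alpha}\delta^{1+\alpha}$ from $\Gamma_f(y)$.

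\textbf{Choice of $\delta$.} Set $\delta=c'\eps^{1/(1+\alpha)}$ so that $\tfrac{L}{1+\alpha}\delta^{1+\alpha}\le\eps$. Then $\rho\sim\eps^{(1-\alpha)/(1+\alpha)}=\eps^\beta$, which is the required scaling.

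\textbf{Main obstacle: horizontal confinement.} The comparison $q_y\le p_\delta$ holds only over $B^{n-1}(y,\delta)$, but the ball from Lemma \ref{lem 16} may project onto a larger horizontal set, since its radius $\rho\sim\delta^{1-\alpha}$ is much bigger than $\delta$ when $\delta$ is small. I would handle this by shrinking the ball. Take the ball of radius $\rho'=\min(\rho,\kappa\delta)$ tangent at the same point along the same inward normal; it lies inside the original ball. Its horizontal projection is within $\rho'\le\kappa\delta<\delta$ of $y$. This is valid because the inward normal of $p_\delta$ at $y$ is $N(\nabla f(y))$, a direction with nonpositive horizontal tilt bounded in terms of $A$. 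But this shrinking gives radius only $\sim\eps^{1/(1+\alpha)}$, which is worse than $\eps^\beta$.

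So the fix should instead enlarge the comparison region. Choose $\delta$ as a fixed small constant $\delta_0\le r$ rather than tied to $\eps$, and use the parameter $\eps$ differently: the inequality $s^{1+\alpha}\le \lambda^{\alpha-1}s^2+\lambda^{1+\alpha}$ actually holds for all $s\ge0$, by Young's inequality up to constants. Then with $\lambda=c'\eps^{1/(1+\alpha)}$, the paraboloid $p_\lambda$ dominates $q_y$ on all of $B^{n-1}(y,r)$. The ball of radius $\rho\sim\lambda^{1-\alpha}\sim\eps^\beta$ then only needs to project into $B^{n-1}(y,r)$. This holds once $\eps\le\eps_0$ makes $\rho\le r/\sqrt{1+A^2}$ or similar, using the bounded normal tilt. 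I expect the careful check of this global Young-type inequality and of the projection containment to be the main work. The constants $C,\eps_0$ then depend only on $L,\alpha,A,r$.
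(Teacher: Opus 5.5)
Your final argument (after ``So the fix should instead enlarge the comparison region'') is essentially the paper's proof, and your local first attempt is correctly discarded. The paper also dominates $q_y$ on all of $\mathbb{R}^{n-1}$ by a quadratic paraboloid plus $\eps$. It uses exactly the two-case check you describe: for $s\le\lambda$ one has $s^{1+\alpha}\le\lambda^{1+\alpha}$, and for $s\ge\lambda$ one has $s^{1+\alpha}=s^{\alpha-1}s^2\le\lambda^{\alpha-1}s^2$, so no Young's inequality or extra constants are needed. It then applies Lemma \ref{lem 16} to get a ball of radius $\sim\eps^\beta$ containing a point at height at most $f(y)+\eps$ above $y$. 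Finally it shrinks $\eps_0$ so that the ball projects into $B^{n-1}(0,2r)$, where $\epi(p)\subset\epi(q_y)\subset\epi(f)$.

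Two points in your confinement step need fixing.

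\textbf{The normal tilt and $A$ are not needed, and your condition is wrong.} The ball from Lemma \ref{lem 16} contains $\Gamma_p(y)$, which lies directly above $y$, and it has diameter $2\rho$. So its horizontal projection lies in $\overbar B^{n-1}(y,2\rho)$ whatever $\nabla f(y)$ is. Hence $2\rho\le r$ suffices; the paper uses the slightly cruder $\eps+2C\eps^\beta<r$. By contrast, $\rho\le r/\sqrt{1+A^2}$ does not suffice in general. The projection reaches distance $\rho\bigl(1+|\nabla f(y)|/\sqrt{1+|\nabla f(y)|^2}\bigr)$ from $y$, which exceeds $r$ for suitable $|\nabla f(y)|$. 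Also, $A<\infty$ holds automatically, since $\nabla f$ is H\"older on the bounded ball $B^{n-1}(0,2r)$.

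\textbf{The case $\alpha=1$ fails as written.} There $\beta=0$ and $\rho=1/(2a_\lambda)=1/L$ does not depend on $\eps$, so shrinking $\eps_0$ cannot force $2\rho\le r$. You must cap the radius instead. Replace the inner ball by its image under the homothety centred at $\Gamma_p(y)$ with ratio $\rho'/\rho$; this smaller ball still contains $\Gamma_p(y)$ and lies in the original ball. For example, take $C\le\min\{1/L,r/4\}$ and $\eps_0<r/2$, as the paper does.

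With these adjustments your proof is complete.
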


\begin{proof}
As $f\in C^{1,\alpha}(B^{n-1}(0,2r))$, let $L>0$ be such that for all $x,y\in B^{n-1}(0,2r)$ we have
$$
|\nabla f(x) - \nabla f(y)|\leq L|x-y|^\alpha.
$$
By Lemma \ref{lem1} we know that for all $x,y\in B^{n-1}(0,2r)$,
\begin{equation}\label{eq 7}
   f(x)\leq \frac{L}{1+\alpha}|x-y|^{1+\alpha} +\nabla f(y)\cdot (x-y) + f(y) .
\end{equation}
Hence, for fixed $y\in B^{n-1}(0,2r)$ and any $\eps>0$, we can define two functions, $g_y:\bbbr^{n-1}\to\bbbr$ and the paraboloid $p^\eps_y:\bbbr^{n-1}\to \bbbr$ by
$$
g_y(x) = \frac{L}{1+\alpha}|x-y|^{1+\alpha} + \nabla f(y)\cdot (x-y) + f(y)
$$
$$
p_y^\eps(x) = \frac{L^{1+\beta}}{(1+\alpha)\eps^\beta}|x-y|^2 + \nabla f(y)\cdot(x-y) + f(y).
$$
We note that $f(y) = g_y(y) = p_y^\eps(y)$ and by \eqref{eq 7} for all $x\in B^{n-1}(0,2r)$, we have  $f(x) \leq g_y(x)$. We now claim that for all $x\in \bbbr^{n-1}$, $y\in B^{n-1}(0,2r)$, and $\eps>0$, we have $g_y(x) \leq p_y^\eps(x) + \eps$, which is equivalent to showing for all $t>0$
$$
\frac{L}{1+\alpha}t^{1+\alpha} \leq \frac{L^{1+\beta}}{(1+\alpha)\eps^\beta} t^2+\eps.
$$
The inequality is obvious when $\alpha =1$ so we assume $0<\alpha<1$. If $t\leq(\eps/L)^{1/(1+\alpha)}$, then $Lt^{1+\alpha}\leq \eps$ implying
$$
\frac{L}{1+\alpha}t^{1+\alpha} \leq \frac{\eps}{1+\alpha}\leq \frac{L^{1+\beta}}{(1+\alpha)\eps^\beta} t^2 + \eps.
$$
If $t\geq(\eps/L)^{1/(1+\alpha)}$, then $t^{\alpha-1}\leq (L/\eps)^\beta$ so that
$$
\frac{L}{1+\alpha}t^{1+\alpha} = \frac{L}{1+\alpha}t^{\alpha-1} t^2 \leq \frac{L}{1+\alpha}\left(\frac{L}{\eps}\right)^\beta t^2 = \frac{L^{1+\beta}}{(1+\alpha)\eps^\beta} t^2 \leq \frac{L^{1+\beta}}{(1+\alpha)\eps^\beta} t^2 + \eps.
$$
In either case, we have proven the desired inequality and for all $\eps>0$ and for all $x,y\in B^{n-1}(0,2r)$,
\begin{equation}
\label{eq 21}
f(x) \leq g_y(x) \leq p^\eps_y(x)+\eps.
\end{equation}

We note that $p_y^\eps +\eps$ is a paraboloid and hence by Lemma \ref{lem 16}, for every $y\in B^{n-1}(0,2r)$ and $x\in\bbbr^{n-1}$ and for any $0<C\leq (1+\alpha)/(2L^{1+\beta})$, there exists $h_y^\eps(x)\in \epi(p_y^\eps + \eps)$ such that 
\begin{equation}
\label{eq 40}
(x, p_y^\eps(x) + \eps)\in \overbar B^n(h_y^\eps(x), C\eps^\beta) \subset \epi(p_y^\eps + \eps).
\end{equation}
For any $y\in B^{n-1}(0,2r)$, as $f(y) = p_y^\eps(y)$, we then have by \eqref{eq 40},
\begin{equation}
\label{eq 41}
\dist\big(\Gamma_f(y),\overbar B^n (h^\eps(y), C\eps^\beta)\big) \leq \eps,
\end{equation}
where for simplicity we rewrite $h_y^\eps(y)$ as $h^\eps(y)$. It remains to show that there exists some $\eps_0>0$ such that for all $0<\eps\leq \eps_0$ and for all $y\in B^{n-1}(0,r)$, we have $\overbar B^n(h^\eps(y), C\eps^\beta)\subset \epi(f)$. To accomplish this, when $\alpha\in(0,1)$, we fix $\eps_0>0$ so that 
\begin{equation}
\label{eq 20}
\eps_0 + 2C\eps_0^\beta<r.
\end{equation}
If $\alpha = 1$, let $0<\eps_0<r/2$ and choose $C\leq\min\{1/L, r/4\}$ so that $\eps_0+2C<r$.
Let $0<\eps\leq \eps_0$. Then for $y\in B^{n-1}(0,r)$, by \eqref{eq 41} and \eqref{eq 20} we have
\begin{equation}
\label{eq36}
\overbar B^n(h^\eps(y), C\eps^\beta) \subset \overbar B^n(\Gamma_f(y), r) \subset B^{n-1}(0,2r) \times (-\infty, \infty).
\end{equation}
By \eqref{eq 21}, we have $p_y^\eps(x) + \eps \geq f(x)$ for all $x\in B^{n-1}(0,2r)$ so that by \eqref{eq 40} we have
$$
\overbar B^n(h^\eps(y), C\eps^\beta)\subset \big(B^{n-1}(0,2r)\times (-\infty,\infty)\big)\cap \epi(p_y^\eps + \eps) \subset \epi(f)
$$
as desired. Therefore we have shown that there exist constants $C,\eps_0>0$ such that for all $0<\eps\leq \eps_0$ and for each $x\in B^{n-1}(0,r)$ there exists $h^\eps(x)\in \epi(f)$ such that $\overbar B^{n}\big(h^\eps(x), C\eps^\beta\big)\subset \epi(f)$ and
$$
\dist\big(\Gamma_f(x),\overbar B^{n}\big(h^\eps(x), C\eps^\beta\big)\big) \leq \eps
$$
which is precisely the definition of $\epi(f)$ satisfying $\AIB_{B^{n-1}(0,r)}(C\eps^\beta,\eps)$.
\end{proof}

We now turn to the proof of $(i)\implies (ii)$ for Theorem \ref{thm1}. We know by Proposition \ref{prop 5}, for the convex body $K$, with boundary of class $C^{1,\alpha}$, for each $x\in\partial K$ there exist $R, L>0$ such that after translating $x\in\partial K$ so that $x\mapsto 0$ and rotating $\nu(x)$ so that $\nu(x)\mapsto e_n$ we have
\begin{equation}
\label{eq6}
\partial K\cap C(0,e_n, R) = \{(x', x_n) : 0\leq x_n = f(x') < R \text{ and } |x'|< R\}
\end{equation}
where
\begin{equation}
\label{eq7}
f(0) = 0,
\quad
\nabla f(0) = 0,
\quad
|\nabla f(x') - \nabla f(y')| \leq L|x' - y'|^\alpha \text{ for all } |x'|, |y'| < R. 
\end{equation}

\begin{proof}[Proof of Forward Direction of Theorem \ref{thm1}]

Let $K\subset\bbbr^n$ be a convex body with boundary of class $C^{1,\alpha}$. For any $x\in\partial K$ by translating and rotating we can assume, by using \eqref{eq6} and \eqref{eq7}, that $x=0$ and
$$
C(0, e_n, 2r, M) \cap K = \{x : |x'|<2r,\, 0\leq f(x')\leq x_n < M\}.
$$
Moreover, by shrinking $r$ if necessary, we may assume, by continuity,
$$
m:=\sup_{|x'|\leq r} f(x') < M.
$$
From the proof of Lemma \ref{lem 17} we know there exist $C_x, \eps_x>0$ such that for all $0<\eps\leq \eps_x$ and for all $|y'|<r$, there exists $h^\eps(y')$ satisfying $\overbar B^{n}(h^\eps(y'), C_x\eps^\beta) \subset \epi(p_{y'}^\eps + \eps)$ and
\begin{equation}
\label{eq 31}
\sup_{z\in \overbar B^{n}(h^\eps(y'), C_x\eps^\beta)}|z-\Gamma_f(y')|\leq \eps+2C_x\eps^\beta,
\quad
\dist(\Gamma_f(y'), \overbar B^{n}(h^\eps(y'), C_x\eps^\beta))\leq \eps.
\end{equation}
Set $\delta = \min\{r, M-m\}>0$. For $\alpha \in (0,1)$ we can decrease $\eps_x$ so that $\eps_x + 2C_x\eps_x^\beta<\delta$. For $\alpha = 1$, we can reduce $C_x$ so that $2C_x<\delta/2$ and then, if necessary, reduce $\eps_x$ so that $\eps_x<\delta/2$ and $\eps_x+2C_x<\delta$. Then by \eqref{eq 31} we have for all $z\in \overbar B^{n}(h^\eps(y'), C_x\eps^\beta)$,
$$
|z'|<2r
\quad
\text{ and }
\quad
f(z') \leq z_n \leq f(y')+\eps + 2C_x \eps^\beta < M
$$
and hence $z\in K$ showing $\overbar B^n(h^\eps(y'),C_x\eps^\beta)\subset K$.

The proof then follows by a compactness argument. Specifically, there exist $x_1,\ldots,x_N\in\partial K$, such that 
$$
\partial K \subset \bigcup_{i=1}^N C(x_i, \nu(x_i), r_i, M_i)
$$
and for each $x_i$ we know there exists $C_i:=C_{x_i}>0$ and $\eps_i:=\eps_{x_i}>0$ such that for all $y\in \partial K\cap C(x_i, \nu(x_i), r_i, M_i)$, we have
$$
\dist(y, K(C_i\eps^\beta))\leq \eps
\quad
\text{for all }
0<\eps\leq \eps_i.
$$
We then choose $C = \min\{C_1,\ldots, C_N\}$ and $\eps_0 = \min\{\eps_1,\ldots, \eps_N\}$. Therefore for any $y\in\partial K$, there is some $i\in \{1,\ldots,N\}$ such that $y\in \partial K\cap C(x_i,\nu(x_i), r_i, M_i)$ and as $K(C_i\eps^\beta)\subset K(C\eps^\beta)$,
$$
\dist(y, K(C\eps^\beta))\leq \dist(y, K(C_i\eps^\beta))\leq \eps
\quad
\text{ for all }
0<\eps\leq \eps_0\leq \eps_i.
$$
Applying Proposition \ref{prop3} completes the proof of this direction of Theorem \ref{thm1}.
\end{proof}

\section{The Converse Implication}
Throughout this section we use the following notation: for a closed convex domain $K$, for all $R>0$ such that $K_R\neq\emptyset$, and for all $x\in\partial K$  let
\begin{equation}
\label{eq 51}
d_x:= |x-\pi_{K_R}(x)| = \dist(x, K_R)
\quad
\text{and}
\quad
v_x:= \frac{\pi_{K_R}(x) - x}{d_x}.
\end{equation}
The proof of Proposition \ref{prop 9} shows that $K_R$ is closed and convex. Note that if $K$ satisfies $\AIB(R,\eps)$, then for every $x\in \partial K$ we have $\dist(x,K(R)) = \dist(x,K_R) - R$ showing
\begin{equation}
\label{eq10}
R\leq d_x \leq R+\eps.
\end{equation}
To show that $K$ has boundary of class $C^{1,\alpha}$ we will show that at each point $x\in\partial K$, there is a unique inward supporting normal vector $\nu(x)$, that this normal vector is $\alpha$-H\"{o}lder continuous on all of $\partial K$, and then apply Proposition \ref{prop 5}. To this end, the next lemma allows us to  identify the relationship between $v_x$ and any inner supporting unit normal vector $\nu$ at $x\in\partial K$.
\begin{lemma}
\label{lem6}
Let $K\subset\bbbr^n$ be a closed convex domain satisfying $\AIB(R,\eps)$ for $R, \eps >0$. If $\nu$ is any inner supporting unit normal to $K$ at $x\in\partial K$, then
\begin{equation}
\label{eq11}
|\nu - v_x|^2 \leq \frac{2\eps}{R}.
\end{equation}
\end{lemma}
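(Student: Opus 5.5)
Let $z:=\pi_{K_R}(x)$, so that $z=x+d_xv_x$ and $\overbar B(z,R)\subset K$. The plan is to test the supporting inequality \eqref{eq13} at a suitable point of this ball. The point of $\overbar B(z,R)$ that minimizes $\langle\cdot-x,\nu\rangle$ is $z-R\nu$, and it lies in $K$. Hence
$$
0\le \langle z-R\nu-x,\nu\rangle = d_x\langle v_x,\nu\rangle - R,
$$
which gives $\langle v_x,\nu\rangle\ge R/d_x$.

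Since $\nu$ and $v_x$ are unit vectors, $|\nu-v_x|^2=2(1-\langle \nu,v_x\rangle)$. Combining this with the previous bound yields
$$
|\nu-v_x|^2\le 2\Bigl(1-\frac{R}{d_x}\Bigr)=\frac{2(d_x-R)}{d_x}.
$$
Now invoke \eqref{eq10}: $\AIB(R,\eps)$ gives $R\le d_x\le R+\eps$. Therefore $d_x-R\le\eps$ and $d_x\ge R$, so the right-hand side is at most $2\eps/R$, which is \eqref{eq11}.

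The only step that needs care is the justification of \eqref{eq10} itself, i.e.\ the identity $\dist(x,K(R))=\dist(x,K_R)-R$ for $x\in\partial K$. I would check it as follows.

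For the upper bound $d_x\le R+\eps$: by $\AIB(R,\eps)$ there is a ball $\overbar B(w,R)\subset K$ with $w\in K_R$ and $\dist(x,\overbar B(w,R))\le\eps$. Then $d_x\le|x-w|\le R+\eps$.

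For the lower bound $d_x\ge R$: any $w\in K_R$ has $\overbar B(w,R)\subset K$. Since $x\in\partial K$ cannot lie in the interior of $K$, we get $|x-w|\ge R$.

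These two facts, together with $d_x>0$ (so that $v_x$ is well defined), are all that is used above.
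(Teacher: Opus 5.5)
Your proof is correct and matches the paper's argument: you test the supporting inequality at $\pi_{K_R}(x)-R\nu\in K$ to get $\langle v_x,\nu\rangle\ge R/d_x$, then combine $|\nu-v_x|^2=2(1-\langle\nu,v_x\rangle)$ with $R\le d_x\le R+\eps$. Your direct check of the two bounds in \eqref{eq10} fills in a step the paper only asserts. For the upper bound, the existence of a ball $\overbar B(w,R)\subset K$ within distance $\eps$ of $x$ uses that $K(R)=K_R+\overbar B(0,R)$ is closed; an approximation argument would work equally well. For the lower bound, you use that $B(w,R)\subset\inte K$.
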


\begin{proof}
Let $x\in\partial K$ and let $\nu\in\bbbs^{n-1}$ satisfy \eqref{eq13}. Then, as $\pi_{K_{R}}(x) - R\nu\in \overbar B(\pi_{K_{R}}(x), R)\subset K$, we have
$$
\langle \pi_{K_{R}}(x) - R\nu - x, \nu \rangle \geq 0 \implies \langle \pi_{K_{R}}(x) - x, \nu\rangle \geq R.
$$
This in turn shows $\langle v_x, \nu\rangle \geq R/d_x$. It then follows that
$$
|\nu - v_x|^2 = 2(1-\langle \nu, v_x\rangle) \leq 2\left(1-\frac{R}{d_x}\right) = 2\left(\frac{d_x - R}{d_x}\right) \leq \frac{2\eps}{R}
$$
where the last inequality follows from \eqref{eq10}.
\end{proof}

\begin{proof}[Proof of Theorem \ref{thm1}, (ii)$\implies$ (i)]
Let $K$ be a closed convex domain satisfying (ii) and let $R = C\eps^\beta$. Fix $x\in\partial K$. If $\nu_1$ and $\nu_2$ are two distinct inner supporting unit normals of $K$ at $x$, then Lemma \ref{lem6} implies
\begin{equation}
\label{eq 52}
|\nu_1 - \nu_2| \leq 2\sqrt{\frac{2\eps}{R}} = 2\sqrt{\frac{2}{C}}\eps^{\alpha/(1+\alpha)}.
\end{equation}
Letting $\eps\to 0^+$ proves that $\nu_1 = \nu_2$, showing the uniqueness of the supporting inner unit normal of $K$ at $x$ and hence $K$ has boundary of class $C^1$ by Proposition \ref{prop 2}. Now let $x,y\in\partial K$ and if $x=y$, the result is obvious. Assume $0<|x-y|^{1+\alpha}\leq \eps_0$, and let $\eps = |x-y|^{1+\alpha}$, so that $\eps\leq \eps_0$. Applying both \eqref{eq16} and \eqref{eq11} we have
$$
|\nu(x) - \nu(y)|\leq |\nu(x) - v_x| + |v_x - v_y| + |v_y - \nu(y)| \leq 2\sqrt{\frac{2\eps}{R}} + \frac{|x-y|}{R}
$$
and substituting the values of $\eps = |x-y|^{1+\alpha}$ and $R = C\eps^\beta$ yields
\begin{equation}
\label{eq 54}
|\nu(x) - \nu(y)|\leq \left(2\sqrt{\frac{2}{C}} + \frac{1}{C}\right)|x-y|^\alpha.
\end{equation}
If $|x-y|^{1+\alpha}>\eps_0$, then we have
$$
|\nu(x) - \nu(y)|\leq 2 \leq \frac{2}{\eps_0^\frac{\alpha}{1+\alpha}}|x-y|^\alpha.
$$
This proves $\nu$ is $\alpha$-H\"{o}lder continuous and applying Proposition \ref{prop 5} concludes the proof.
\end{proof}


\section{Proof of Corollary \ref{cor1}}
The proof of Corollary \ref{cor1}(a) is an easy application of Lemma \ref{lem 17}. To prove Corollary \ref{cor1}(b), we need to apply useful parts of the proofs of Proposition \ref{prop 5}, Lemma \ref{lem6} and Theorem \ref{thm1} (ii)$\implies$(i).

\begin{proof}[Proof of Corollary \ref{cor1}]
The proof of (a) is simple, as for any $x\in\bbbr^n$ and $r>0$ we can find an $R>0$ such that $\overbar B(x,r)\subset B(0,R)$ and applying Lemma \ref{lem 17} to $f$ restricted to $B(0,2R)$ we have that $\epi(f)$ satisfies $\AIB_{B(0,R)}(C\eps^\beta,\eps)$ for all $0<\eps\leq\eps_0$, where no convexity is needed, and hence $\epi(f)$ satisfies $\AIB_{\overbar B(x,r)}(C\eps^\beta,\eps)$ for all $0<\eps\leq\eps_0$. Therefore the main part of the proof is proving (b). 

Fix $a\in\bbbr^n$ and $r>0$. By the hypothesis, there exist $C, \eps_0>0$ such that $\epi(f)$ satisfies $\AIB_{\overbar B(a,2r)}(C\eps^\beta, \eps)$ for all $0<\eps\leq \eps_0$. Let $R=C\eps^\beta$ and note that by applying the arguments establishing \eqref{eq10} and \eqref{eq11}, using the closed convex domain $K = \epi(f)$, we have for all $x\in B(a,2r)$,
$$
R\leq d_{\Gamma_f(x)} \leq R+\eps,
\quad
|\nu - v_{\Gamma_f(x)}|^2 \leq \frac{2\eps}{R}
$$
where $\nu$ is any inward supporting unit normal at $\Gamma_f(x)$ and $d_{\Gamma_f(x)}$ and $v_{\Gamma_f(x)}$ are defined in \eqref{eq 51}. Then by \eqref{eq 52} and the proof of Theorem \ref{thm1} (ii)$\implies$(i), there is a unique inward supporting unit normal at $\Gamma_f(x)$, denoted $\nu(\Gamma_f(x))$, implying a unique supporting hyperplane of $f$ at $\Gamma_f(x)$. Hence $f\in C^1(B(a,2r))$. Letting $A = \max_{x\in\overbar B(a,r)} |\nabla f(x)|$, by \eqref{eq 53} we know
\begin{equation}
\label{eq 55}
|\Gamma_f(x) - \Gamma_f(y)|\leq \sqrt{1+A^2}|x-y|
\end{equation}
for all $x,y\in \overbar B(a,r)$. Applying the argument from Theorem \ref{thm1} (ii)$\implies$(i) that produced \eqref{eq 54} and combining this with \eqref{eq 55} implies the existence of some $L>0$ satisfying
$$
|\nu(\Gamma_f(x)) - \nu(\Gamma_f(y))| \leq L|\Gamma_f(x) - \Gamma_f(y)|^\alpha \leq L\sqrt{1+A^2}|x-y|^\alpha.
$$
As in Proposition \ref{prop 5}, by \eqref{eq 63}, the function $\phi(z) = -z'/z_{n+1}$ is Lipschitz on 
$$
\{\nu(\Gamma_f(x)): x\in B(a,r)\} \subset \left\{ z\in \bbbr^{n+1}: |z|\leq 1, \, z_{n+1} \geq (1+A^2)^{-1/2}\right\}
$$
and $\nabla f (x) = \phi(\nu(\Gamma_f(x)))$ is the composition of a Lipschitz function $\phi$ and an $\alpha$-H\"{o}lder continuous function $x\mapsto \nu(\Gamma_f(x))$ on $B(a,r)$, showing $\nabla f$ is $\alpha$-H\"{o}lder continuous on $B(a,r)$. Therefore $f\in C^{1,\alpha}(B(a,r))$ proving that $f\in C^{1,\alpha}_{\rm loc}(\bbbr^n)$.

\end{proof}


\section{Regularity of the Convex Envelope}

For a function $f:\bbbr^n\to\bbbr$ bounded below by an affine function, define the convex envelope of $f$ as
$$
\conv(f)(x):= \sup\{g(x): g\leq f \text{ and } g \text{ is convex } \}.
$$

The next result is standard in convex analysis, as an application of the Carath\'{e}odory Theorem to the set $\co(\epi(f))$ and a reduction of the number of points in an epigraph representation. Proofs can be found in \cite{dacorogna, rockafellar}.
\begin{theorem}
\label{thm4}
If $f:\bbbr^n\to\bbbr$ is bounded below by an affine function, then
$$
\conv(f)(x) = \inf\left\{\sum_{i=1}^{n+1} \lambda_i f(x_i): \lambda_i \geq 0, \, \sum_{i=1}^{n+1}\lambda_i = 1, \, \sum_{i=1}^{n+1}\lambda_i x_i = x\right\}.
$$
\end{theorem}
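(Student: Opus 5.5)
Throughout, let $\ell$ be an affine minorant of $f$. Write $F(x)$ for the infimum on the right-hand side with $n+1$ points, and $G(x)$ for the same infimum taken over convex combinations of an arbitrary finite number $m$ of points. The plan is to prove $\conv(f)=G$ and then $G=F$. The second equality is where Carath\'eodory's Theorem (Theorem \ref{thm 14}) is used, together with a reduction from $n+2$ points down to $n+1$.

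\emph{Step 1: $\conv(f)\le G$.} Let $g$ be convex with $g\le f$. Jensen's inequality gives, for any representation $x=\sum\lambda_i x_i$,
$$
g(x)\le \sum\lambda_i g(x_i)\le \sum\lambda_i f(x_i).
$$
Taking the infimum over representations gives $g\le G$, and then the supremum over $g$ gives $\conv(f)\le G$.

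\emph{Step 2: $G\le \conv(f)$.} It suffices to show that $G$ is itself an admissible competitor.
\begin{itemize}
\item[(a)] $G\le f$: take $m=1$ and $x_1=x$.
\item[(b)] $G$ is real valued and $G\ge \ell$: for any representation, $\sum\lambda_i f(x_i)\ge\sum\lambda_i\ell(x_i)=\ell(x)$.
\item[(c)] $G$ is convex. Take $x,y$, $t\in[0,1]$ and near-optimal representations $x=\sum\lambda_i x_i$, $y=\sum\mu_j y_j$. Concatenating them with weights $t\lambda_i$ and $(1-t)\mu_j$ represents $tx+(1-t)y$, so $G(tx+(1-t)y)\le tG(x)+(1-t)G(y)+\eta$ for every $\eta>0$.
\end{itemize}
Hence $G\le\conv(f)$, and so $\conv(f)=G$.

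\emph{Step 3: $G=F$.} Trivially $G\le F$. For the reverse inequality, fix a representation $x=\sum_{i=1}^m\lambda_i x_i$ with value $t=\sum_{i=1}^m\lambda_i f(x_i)$. We produce at most $n+1$ of the points $x_i$ whose combination has value at most $t$.
\begin{itemize}
\item[(a)] Set $P=\co\{(x_i,f(x_i))\}_{i=1}^m\subset\bbbr^{n+1}$. This is a compact polytope and contains $(x,t)$.
\item[(b)] Let $t'=\min\{s:(x,s)\in P\}\le t$; the minimum exists by compactness. The point $(x,t')$ lies on the boundary of $P$, since $(x,t'-\delta)\notin P$ for every $\delta>0$.
\item[(c)] If $P$ has empty interior, then $P$ lies in an affine subspace of dimension at most $n$. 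Otherwise, by the supporting hyperplane theorem, $(x,t')$ lies in $P\cap H$ for some supporting hyperplane $H$. This set is the convex hull of those vertices $(x_i,f(x_i))$ lying in $H$, and $H$ has dimension $n$.
\item[(d)] In either case, apply Theorem \ref{thm 14} inside an $n$-dimensional affine space. This writes $(x,t')$ as a convex combination of at most $n+1$ of the points $(x_i,f(x_i))$.
\end{itemize}
Reading off coordinates gives weights $\lambda'_i$, on at most $n+1$ points (padding with zero weights if fewer), such that $\sum\lambda'_ix_i=x$ and $\sum\lambda'_i f(x_i)=t'\le t$. Therefore $F\le G$.

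The main obstacle is Step 3. A naive application of Carath\'eodory's Theorem in $\bbbr^{n+1}$ yields $n+2$ points, and the key idea is to push the point down to the lower boundary of the polytope, where it lies in a face of dimension at most $n$. The points to check carefully are the case distinction on the interior of $P$, and that $P\cap H$ is the convex hull of the generating points lying in $H$. For the latter: if $\langle w,p\rangle\ge c$ on $P$ with equality on $H$, then a convex combination of generators attains $c$ only if every generator carrying positive weight lies in $H$.
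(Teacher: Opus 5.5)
Your proof is correct and follows the standard route the paper points to; the paper gives no proof of its own and only cites \cite{dacorogna, rockafellar} for Carath\'{e}odory's theorem applied in $\bbbr^{n+1}$ followed by a reduction from $n+2$ to $n+1$ points. Your reduction carries out that step cleanly: you push $(x,t)$ down to the lowest point of $P$ above $x$, place it in a supporting hyperplane $H$, and check that $P\cap H$ is the convex hull of the generators lying in $H$.
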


We say a function $f:\bbbr^n\to\bbbr$ is coercive if $f(x)\to\infty$ as $|x|\to\infty$. The next lemma is the geometric core to the proof of Theorem \ref{thm2} and demonstrates the usefulness of using closed balls in the epigraph to establish regularity of a function. The proof of Lemma \ref{lem8} begins by making use of Theorem \ref{thm4} in a similar fashion to \cite[Theorem 5.4]{kiselman} and \cite[Theorem 2.3]{fathi}, that is, representing the convex envelope through use of sequences of convex combinations of the original function. The novelty of the proof lies in noting that if $f$ satisfies an approximate inner ball condition, applying the approximate inner balls to this sequence shows $\conv(f)$ inherits a comparable approximate inner ball condition, so that $\conv(f)$ retains the regularity of $f$.
\begin{lemma}
\label{lem8}
Let $f:\bbbr^n\to\bbbr$ be continuous and coercive. For every nonempty compact $V\subset\bbbr^n$, there is $M>0$ with the following property: if $\epi(f)$ satisfies $\AIB_{\overbar B(0,M)}(R,\eps)$, then $\epi(\conv(f))$ satisfies $\AIB_{V}(R/(n+1),\eps)$.
\end{lemma}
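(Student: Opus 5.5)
Fix a nonempty compact $V$. We use Theorem \ref{thm4} to write, for each $x\in V$,
$$
\conv(f)(x)=\inf\Big\{\sum_{i=1}^{n+1}\lambda_i f(x_i)\Big\}
$$
over convex combinations with $\sum\lambda_i x_i=x$. The first step is to localize the competing points. Since $f$ is continuous and coercive, $\conv(f)$ is finite and bounded on $V$, say by $m_V$. Also $f\ge -c_0$ everywhere for some constant $c_0$. If $\lambda_i f(x_i)$ contributes to a near-optimal combination, then $\lambda_i f(x_i)\le m_V+1+c_0$. This bounds $f(x_i)$ only when $\lambda_i$ is not tiny, and this is the delicate point. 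Since at least one $\lambda_i\ge 1/(n+1)$, I plan to avoid needing all $x_i$ to be bounded.

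The key geometric step is as follows. Take a near-optimal combination with $\sum\lambda_i\Gamma_f(x_i)=(x,t)$, where $t\le \conv(f)(x)+\eta$. Choose an index $j$ with $\lambda_j\ge 1/(n+1)$. For this index, $f(x_j)\le (m_V+1+c_0)(n+1)$, so coercivity puts $x_j$ in a fixed ball $\overbar B(0,M)$; this choice of $M$ depends only on $V$ and $f$. The AIB hypothesis then gives a closed ball $\overbar B(h_j,R)\subset\epi(f)$ with $|\Gamma_f(x_j)-p_j|\le\eps$ for some $p_j\in\overbar B(h_j,R)$. Consider the set
$$
S=\lambda_j\overbar B(h_j,R)+\sum_{i\ne j}\lambda_i\Gamma_f(x_i).
$$
Since $\epi(f)\subset\epi(\conv f)$ and the latter is convex, $S\subset\epi(\conv f)$. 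By the scaling/translation identity in Section 2, $S$ is a closed ball of radius $\lambda_jR\ge R/(n+1)$. Shrinking it concentrically gives a ball of radius exactly $R/(n+1)$ inside it. The point $\lambda_jp_j+\sum_{i\ne j}\lambda_i\Gamma_f(x_i)\in S$ lies within $\lambda_j\eps\le\eps$ of $(x,t)$. When shrinking, we pick the subball so that it contains the point of $S$ nearest $(x,t)$: a ball of radius $r\le\lambda_jR$ inside $S$ containing a given point of $S$ always exists.

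It remains to pass from $(x,t)$ to $\Gamma_{\conv f}(x)=(x,\conv f(x))$. Here $t\ge \conv f(x)$ and $t-\conv f(x)\le\eta$, so the distance from $\Gamma_{\conv f}(x)$ to the ball is at most $\eps+\eta$. To remove $\eta$ there are two options. The first is compactness: letting $\eta\to0$, the relevant data $x_j\in\overbar B(0,M)$, $\lambda_j\in[1/(n+1),1]$, and the other partial sums are bounded, being equal to $(x,t)-\lambda_j\Gamma_f(x_j)$. The centers of the radius-$R/(n+1)$ balls therefore stay bounded, a limit ball exists, it lies in the closed set $\epi(\conv f)$, and it is within $\eps$ of $\Gamma_{\conv f}(x)$. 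The second option is to note that the infimum in Theorem \ref{thm4} is attained for continuous coercive $f$. I will use the compactness limit, since it needs only closedness of $\epi(\conv f)$, which holds because $\conv f$ is convex and finite, hence continuous.

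The main obstacle is the localization: ensuring that the chosen index's point $x_j$ lies in $\overbar B(0,M)$ uniformly over $x\in V$. This is handled by the pigeonhole choice $\lambda_j\ge 1/(n+1)$ together with the lower bound on $f$. It is exactly this step that costs the factor $1/(n+1)$ in the radius.
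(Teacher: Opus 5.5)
Your proposal is correct and follows essentially the same route as the paper's proof. You pigeonhole an index with $\lambda_j\ge 1/(n+1)$, use coercivity to put $x_j$ in $\overbar B(0,M)$, form $\lambda_j\overbar B(h_j,R)+\sum_{i\ne j}\lambda_i\Gamma_f(x_i)\subset\epi(\conv(f))$, and remove the approximation error by a compactness limit using closedness of $\epi(\conv(f))$. Shrinking to radius $R/(n+1)$ before the limit rather than after, and using $f\ge -c_0$ instead of normalizing $f\ge 0$, are immaterial differences.

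One caution: your aside that the infimum in Theorem \ref{thm4} is attained for continuous coercive $f$ is false in general. For example, $f(x)=(1+x^2)^{1/4}$ on $\bbbr$ has $\conv(f)\equiv 1$, and this value is not attained at $x=1$. So your proof genuinely needs the compactness route, which is the one you chose.
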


\begin{proof}
By continuity and coercivity, $f$ is bounded below. Hence we can add a constant to $f$ so that $f\geq 0$. Let $A:=\max_{V}f$. Then by coercivity, we can choose $M>0$ large enough so that $V\subset B(0,M)$ and
\begin{equation}
\label{eq15}
f(z) > (n+1)(A+1)
\quad
\text{ if }
|z|\geq M.
\end{equation}

Fix $y\in V$. By Theorem \ref{thm4}, we can choose $\lambda_i^k\geq 0$ and $y_i^k\in\bbbr^n$ such that
$$
\sum_{i=1}^{n+1} \lambda_i^k = 1,
\quad
y=\sum_{i=1}^{n+1}\lambda_i^k y_i^k,
\quad
\sum_{i=1}^{n+1}\lambda_i^k f(y_i^k) \to \conv(f)(y) \leq A
$$
with $\lambda_1^k \geq \cdots \geq \lambda_{n+1}^k$ for all $k\in\bbbn$. Thus, $\lambda_1^k \geq 1/(n+1)$. Moreover there exists $K_0\in\bbbn$ such that $|y_1^k|<M$ for all $k\geq K_0$. Indeed, for large $k$, recalling $\lambda_i^k, f(y_i^k)\geq 0$, we have 
$$
\lambda_1^k f(y_1^k)\leq \sum_{i=1}^{n+1}\lambda_i^k f(y_i^k) \leq A+1 
$$
and $\lambda_1^k \geq 1/(n+1)$ shows that $f(y_1^k)\leq (n+1)(A+1)$. Then applying \eqref{eq15} shows $|y_1^k|<M$. We then apply the assumption that $\epi(f)$ satisfies $\AIB_{\overbar B(0,M)}(R,\eps)$ to the specific points $(y_1^k, f(y_1^k))$, that is, for each $k\geq K_0$ there exists $h_1^k\in\epi(f)$ such that 
\begin{equation}
\label{eq17}
\overbar B(h_1^k, R) \subset \epi(f)
\quad
\text{ and }
\quad
\dist(\Gamma_f(y_1^k), \overbar B(h_1^k, R) ) \leq \eps.
\end{equation}
Define the set
$$
B^k := \lambda_1^k \overbar B(h_1^k, R) + \sum_{i=2}^{n+1} \lambda_i^k \{\Gamma_f(y_i^k)\}
$$
and note that $B^k$ is a closed ball of radius $\lambda_1^k R$. Moreover, $B^k\subset\epi(\conv(f))$ because $\epi(\conv(f))$ is convex and contains $\epi(f)$. We note that the centers of $B^k$, given by $h^k:=\lambda_1^k h_1^k + \sum_{i=2}^{n+1} \lambda_i^k \Gamma_f(y_i^k)$, are bounded as they differ by at most $R+\eps$ from the convergent sequence $\sum_{i=1}^{n+1}\lambda_i^k \Gamma_f(y_i^k)$. Given that both $\lambda_1^k$ and $h^k$ are bounded, by passing to a subsequence, we have that $h^k\to h$ and $\lambda_1^k \to \lambda \geq 1/(n+1)$. As $\conv(f)\geq 0$ is convex and continuous, $\epi(\conv(f))$ is closed which implies
$$
\overbar B(h, \lambda R)\subset \epi(\conv(f)).
$$
By \eqref{eq17}, we can choose $a_1^k\in \overbar B(h_1^k, R)$ such that $|a_1^k - \Gamma_f(y_1^k)|\leq \eps$, and then define the point $a^k:=\lambda_1^k a_1^k + \sum_{i=2}^{n+1} \lambda_i^k \Gamma_f(y_i^k)$. We note that $a^k\in B^k$ for each $k$. As in the argument for the boundedness of $h^k$, the sequence $(a^k)$ is bounded since it differs by at most $\eps$ from the convergent sequence $\sum_{i=1}^{n+1}\lambda_i^k \Gamma_f(y_i^k)$. Thus, passing to another subsequence, we have that $a^k\to a\in \overbar B(h,\lambda R)$ where $|a - \Gamma_{\conv(f)}(y)| \leq \lambda \eps \leq \eps$. This shows that for every $y\in V$, there exists $\overbar B(h, \lambda R)\subset \epi(\conv(f))$ such that $\dist(\Gamma_{\conv(f)}(y), \overbar B(h,\lambda R)) \leq \eps$.  The result follows by noting that for each $y\in V$ we can then find a closed ball $B_y$ of radius $R/(n+1)\leq\lambda R$, such that $B_y\subset \overbar B(h, \lambda R)$, though not necessarily concentric with $\overbar B(h,\lambda R)$, and $\dist(\Gamma_{\conv(f)}(y), B_y) \leq \eps$, so that $\epi(\conv(f))$ satisfies $\AIB_V(R/(n+1), \eps)$.
\end{proof}

\begin{proof}[Proof of Theorem \ref{thm2}]
Fix $x\in\bbbr^n$ and $r>0$ and choose $M$ as in Lemma \ref{lem8} with $V = \overbar B(x,r)$. Corollary \ref{cor1}(a) applied to the closed ball $\overbar B(0,M)$ provides the existence of $C,\eps_0>0$ such that $\epi(f)$ satisfies $\AIB_{\overbar B(0,M)}(C\eps^\beta, \eps)$ for all $0<\eps\leq \eps_0$. Lemma \ref{lem8} then shows that $\epi(\conv(f))$ satisfies $\AIB_V(C\eps^\beta/(n+1), \eps)$ and Corollary \ref{cor1}(b) completes the proof.
 \end{proof}

 \section{Regularity of the Sum of Convex Domains}
The approximate inner ball condition is inherited by the Minkowski sum, and hence the proof of Theorem \ref{thm3} follows almost immediately from Theorem \ref{thm1}.
\begin{proof}[Proof of Theorem \ref{thm3}]
Let $\beta:= (1-\alpha)/(1+\alpha)$. As $A$ is a convex body and $B$ is convex and closed, $A+B$ is a closed convex domain. By Theorem \ref{thm1}, there are constants $C,\eps_0>0$ such that every point $a\in A$ lies within an $\eps$ distance of a closed ball $\overbar B(h,C\eps^\beta)\subset A$ for all $0<\eps\leq \eps_0$. Take any $z= a+b\in A+B$. Translating $\overbar B(h,C\eps^\beta)$ by the point $b$ yields $\overbar B(h+b, C\eps^\beta)\subset A+B$, and
$$
\dist(z,\overbar B(h+b, C\eps^\beta)) = \dist(a, \overbar B(h, C\eps^\beta))\leq \eps.
$$
Therefore $A+B$ satisfies $\AIB(C\eps^\beta,\eps)$ for every $0<\eps\leq \eps_0$ and applying Theorem \ref{thm1} concludes the proof.
\end{proof}

\section*{Acknowledgments}
The author thanks Piotr Haj{\l}asz for his guidance with developing the ideas in this paper and for being a great mentor.

\section*{Declaration of generative AI use}
The results in this paper were developed as part of the doctoral thesis of the author, from 2021 - 2024, before the use of LLM's were as widespread. During the preparation of this manuscript, the author used OpenAI's ChatGPT to assist with literature searches, exposition,
formatting, and the review of mathematical arguments. The author takes full responsibility for the mathematical results, proofs, references, and final content of the manuscript.

\end{document}